    \numberwithin{equation}{section}  
    \let\Im\relax
    \DeclareMathOperator{\Im}{Im}
    \DeclareMathOperator{\Ker}{Ker}
    \DeclareMathOperator{\Coker}{Coker}
    \DeclareMathOperator{\Id}{Id}
    \DeclareMathOperator{\rk}{rk}
    \DeclareMathOperator{\End}{End}
    \DeclareMathOperator{\Hom}{Hom}
    \DeclareMathOperator{\Ext}{Ext}
    \DeclareMathOperator{\tr}{tr}
    \DeclareMathOperator{\sEnd}{\mathcal{E}\! \mathit{nd}}
    \DeclareMathOperator{\CHom}{\underline{Hom}}
    \DeclareMathOperator{\Coh}{\mathbf{Coh}}
    \DeclareMathOperator{\mMod}{-\mathbf{Mod}}
    \DeclareMathOperator{\Spec}{Spec}
    \DeclareMathOperator{\Def}{Def}
    \title{On Conormal Lie Algebras of Feigin--Odesskii Poisson Structures}
    \author{Leonid Gorodetsky\thanks{HSE University, Moscow, Russia \quad \tt{ls.gorod.9@gmail.com}}\and Nikita Markarian\thanks{Max Planck Institute for Mathematics, Bonn, Germany \quad \tt{nikita.markarian@gmail.com}} }
    \date{}
\begin{document} 
    
    \newtheorem{Proposition}{Proposition}[section]
    \newtheorem{Lemma}{Lemma}[section]
    \newtheorem{Corollary}{Corollary}[section]
    \newtheorem{Theorem}{Theorem}
    \newtheorem{Example}{Example}[section]
    \newtheorem{Definition}{Definition}[section]
    \newtheorem*{Acknowledgments}{Acknowledgments}
    
    \newcommand{\mb}{\mathbb}
    \renewcommand{\phi}{\varphi}
    \renewcommand{\O}{\mathcal{O}}
    \newcommand{\D}{\mathcal D}
    \newcommand{\eps}{\varepsilon}
    \newcommand{\vertsum}[2]{\ensuremath{\underset{\displaystyle{#2}}{\overset{\displaystyle{#1}}{\oplus}}}}
    \newcommand{\Lotimes}{\overset{L}{\otimes}}

    \maketitle
    
    \begin{abstract}
        The main result of the paper is a description of conormal Lie algebras of Feigin--Odesskii Poisson structures. In order to obtain it we introduce a new variant of a definition of a Feigin--Odesskii Poisson structure: we define it using a differential on the second page of a certain spectral sequence. In the general case this spectral sequence computes morphisms and higher $\Ext's$ between filtered objects in an abelian category. Moreover, we use our definition to give another proof of the description of symplectic leaves of Feigin--Odesskii Poisson structures.
    \end{abstract}	
    
    \tableofcontents
    
    \section{Introduction}
    
    Let $C$ be an elliptic curve over an algebraically closed field $k$ of characteristic 0 and let $V$ be a stable vector bundle (e.g. a line bundle) of degree $n>0$ on $C$. The Feigin--Odesskii Poisson structure is a remarkable Poisson structure on the projective space $\mb{P}\Ext^1(V, \O_C)$ with the following key property. Consider extensions of the form
    \begin{equation}\label{extension_intro}
        \xymatrix{
            0\ar[r]&\O_C\ar[r]&E\ar[r]&V\ar[r]&0,
        }
    \end{equation}
    which are parameterized by the points of $\Ext^1(V, \O_C)$. One can ask when different extensions lead to the same vector bundle $E$. Firstly note that proportional extensions always lead to the same vector bundle $E$, so it is convenient to consider $\mb{P}\Ext^1(V, \O_C)$ instead of $\Ext^1(V, \O_C)$. Then $\mb{P}\Ext^1(V, \O_C)$ is a disjoint union (as a set) of the isomorphism classes of $E$ (two extensions belong to one isomorphism class of $E$ if they lead to the same vector bundle $E$). The Feigin-Odesskii Poisson structure gives a local description of the isomorphism classes of $E$: connected components of isomorphism classes of $E$ are symplectic leaves of the Feigin--Odesskii Poisson structure on $\mb{P}\Ext^1(V, \O_C)$. 
    
    Feigin--Odesskii Poisson structures were introduced in 1995 by B. Feigin and A. Odesskii in \cite{feigin1995vector}, and they claimed the fact about symplectic leaves, but the proof was not included in the paper. Three years later in \cite{Polishchuk1998} A. Polishchuk defined some Poisson structure on the same space using completely different approach, and only 20 years later in \cite{PolishchukHua2018} Z. Hua and Polishchuk proved that if $V$ is a line bundle then the latter Poisson structure coincides with the Feigin--Odesskii Poisson structure. For this reason Poisson structures defined by Polishchuk are called Feigin--Odesskii Poisson structures as well.
    
    In \cite{PolishchukHua2018} Hua and Polishchuk studied shifted Poisson structures on derived stacks and Feigin--Odesskii Poisson structures appeared as a special case of their construction. In \cite{PolishchukHua2020} they continued studying properties of Feigin--Odesskii Poisson structures and proved that one can recover the elliptic curve $C$ from the Poisson structure. Moreover, they gave another interpretation of Feigin--Odesskii Poisson structures in terms of the triple Massey product (see \cite[Lemma 2.1]{PolishchukHua2020}). A connection between Feigin--Odesskii Poisson structures and secant varieties of $C$ was used in \cite{Polishchuk2022, markarian2024compatible}. In \cite{chirvasitu2023symplectic} secant varieties were used to prove the fact about symplectic leaves in the case when $V$ is a line bundle. In \cite{markarian2024compatible} N. Markarian and Polishchuk studied when different Feigin--Odesskii Poisson structures are compatible, and in this paper they also provided a description of conormal Lie algebras of the Feigin--Odesskii Poisson structure in the case when $V$ is a line bundle.
    
    In the present paper we give a new variant of a definition of Feigin–Odesskii Poisson structures based on a certain spectral sequence and then we use it to give a simple proof of the fact about symplectic leaves (Theorem \ref{thm_symplectic_leaves}) and describe conormal Lie algebras of Feigin--Odesskii Poisson structures (Theorem \ref{thm_conormal_algebras}).
    
    The paper is organized as follows. In Section \ref{section_preliminaries} we provide preliminary facts about stable vector bundles on elliptic curves, the Serre duality on elliptic curves, Poisson structures, and first order deformations of coherent sheaves. 
    
    In Section \ref{section_FO_brackets} we introduce a new variant of a definition of Feigin--Odesskii Poisson structures. In Subsection \ref{subsection_spectral_sequence} we provide a general description of a spectral sequence computing $\Ext's$ between objects of an abelian category that are given as extensions. We describe differentials of this spectral sequence in terms of compositions of $\Ext$'s and triple Massey products. In Subsection \ref{subsection_moduli_space} we show that the space $\mb{P}\Ext^1(V, \O_C)$ of proportionality classes of extensions \eqref{extension_intro} is the moduli space of filtered vector bundles $E\supset L\supset 0$ with fixed associated quotients $E/L\simeq V$ and $L\simeq \O_C$. This description of allows us to view tangent vectors to $P$ as first order deformations of filtered vector bundles. In Subsection \ref{subsection_FO_definition} we apply the construction of the spectral sequence to the extension \eqref{extension_intro} to get a spectral sequence computing $\Ext^\bullet(E, E)$. Then we define the Feigin--Odesskii Poisson structure on $\mb{P}\Ext^1(V, \O_C)$ at the point corresponding to the extension \eqref{extension_intro} as the differential on the second page of the constructed spectral sequence. A description of the differential in terms of the triple Massey product shows that our definition of the Feigin--Odesskii Poisson structure is equivalent to the definition in \cite{PolishchukHua2020}. A similar construction was used in \cite{FKMM_Gmonopoles} to define a symplectic structure coming from a degeneration of the elliptic curve to $\mb{P}^1$. In Subsection \ref{subsection_symplectic_leaves} we prove Theorem \ref{thm_symplectic_leaves} that gives a description of symplectic leaves of Feigin--Odesskii Poisson structures.
    
    In Section \ref{section_conormal_algebras} we describe conormal Lie algebras of Feigin--Odesskii Poisson structures (Theorem \ref{thm_conormal_algebras}), and this is the main result of the paper. In Subsection \ref{subsection_intrinsic_derivative} we give an algebraic definition of the intrinsic derivative of a morphism of vector bundles on an algebraic variety. Then, following \cite{AlanWeinstein}, we formulate the definition of conormal Lie algebras of a Poisson structure based on the intrinsic derivative. Finally, in Subsection \ref{subsection_conormal_FO} we formulate and prove Theorem \ref{thm_conormal_algebras} that gives a description of conormal Lie algebras of the Feigin--Odesskii Poisson structure. In \cite{markarian2024compatible} this result was proved for a special case when $V$ is a line bundle, and in \cite{MarkarianPolishchuk2023compatibleG25} it is used to study compatibility of Feigin--Odesskii brackets in the case if elliptic curves are given as linear sections of the Grassmannian $G(2, 5)$. Our result would be implied by an explicit construction of the symplectic groupoid of the Feigin--Odesskii Poisson structures following \cite{Safronov_ShiftedPoisson}, but our approach is more direct and elementary.

    \begin{Acknowledgments}
        L.G. is very grateful to Alexey Gorodentsev and Mikhail Finkelberg
        and N.M. is very grateful to Alexander Polishchuk for many helpful conversations. N.M.
        would like to thank the Max Planck Institute for Mathematics for hospitality and perfect
        work conditions.
    \end{Acknowledgments}
    
    \section{Preliminaries}\label{section_preliminaries}
    
    Throughout the paper we use the following notations and conventions.
    \begin{itemize}
        \item We work over an algebraically closed field $k$ of characteristic $0$.
        \item By $C$ we denote an elliptic curve over $k$, i.e. a smooth projective curve of genus $1$.
        \item We identify vector bundles on smooth algebraic varieties with locally free coherent sheaves.
        \item For a scheme $X$ over $k$ we denote by $\D(X) = \D^b(\Coh(X))$ the bounded derived category of coherent sheaves on $X$. Similarly, for a $k$-algebra $A$ we denote $\D(A) = \D^b(A\mMod)$
        \item If $F\colon \mathcal{A}\to \mathcal{B}$ is an exact functor between abelian categories, we denote the induced functor between derived categories by the same letter $F\colon \D^b(\mathcal A)\to \D^b(\mathcal B)$.
        \item In order to simplify descriptions of moduli spaces and deformations we use the following convention: we write ''$\simeq$'' for existence of some isomorphism and ''$=$'' for a specified isomorphism.
    \end{itemize}
    
    \subsection{Stable vector bundles on elliptic curves}
    
    Vector bundles on elliptic curves were classified by M. Atiyah in \cite{AtiyahVectorBundles}, and to present the classification it is convenient to use the notion of stable vector bundles. For the following notions and facts we refer to \cite{Stable_Bundles}.
    \begin{Definition}
        Let $V$ be a non-zero vector bundle on a smooth projective curve over $k$.
        \begin{enumerate}
            \item The slope of $V$ is the number
            \begin{equation*}
                \mu(V) = \frac{\deg(V)}{\rk(V)},
            \end{equation*}
            where $\deg(V) = \deg(\det(V))$.
            \item $V$ is called stable if for any non-zero proper coherent subsheaf $V'\varsubsetneq V$
            \begin{equation*}
                \mu(V') < \mu(V).
            \end{equation*}
            Note that $V'$ is automatically a vector bundle since any subsheaf of a locally free coherent sheaf on a smooth projective curve is locally free.
        \end{enumerate}
    \end{Definition}
    
    The following two lemmas show significance of stable vector bundles on elliptic curves.
    \begin{Lemma}
        Let $r > 0$ and $n$ be integers and $L$ be a line bundle of degree $n$ on an elliptic curve $C$.
        \begin{enumerate}
            \item A stable vector bundle $V$ on $C$ of rank $r$ and degree $n$ exists if and only if $\gcd(r, n) = 1$.
            \item If $\gcd(r, n) = 1$, there is a unique stable vector bundle on $C$ of rank $r$ and determinant $L$.
        \end{enumerate}
    \end{Lemma}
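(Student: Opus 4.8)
The plan is to deduce both parts from the transitive action of the group of exact autoequivalences of $\D(C)$ on primitive numerical classes, reducing everything to the rank-one case. Throughout I would use two features of the elliptic curve: Serre duality $\Ext^1(E,F)\cong\Hom(F,E)^*$ (valid since $K_C\cong\O_C$) and the resulting Euler form
\begin{equation*}
\chi(E,F)=\dim\Hom(E,F)-\dim\Ext^1(E,F)=\rk(E)\deg(F)-\rk(F)\deg(E).
\end{equation*}
The base case $r=1$ is immediate: every line bundle is stable, having no proper nonzero subsheaf of positive rank, and a line bundle equals its own determinant, so there is exactly one of determinant $L$. It therefore suffices to move the class $(1,0)$, and more generally $(1,\deg L)$, to an arbitrary primitive class $(r,n)$ with $r>0$.

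To this end I would introduce two autoequivalences of $\D(C)$: the twist $T=-\otimes\O_C(p_0)$ by a degree-one line bundle, and the Fourier--Mukai transform $\Phi$ with kernel the Poincar\'e bundle. On the numerical Grothendieck group $\mb Z^2$ with coordinates $(\rk,\deg)$ they act by
\begin{equation*}
T=\begin{pmatrix}1&0\\1&1\end{pmatrix},\qquad \Phi=\begin{pmatrix}0&1\\-1&0\end{pmatrix},
\end{equation*}
and these matrices generate $\operatorname{SL}_2(\mb Z)$, which acts transitively on primitive vectors. The key point is that both autoequivalences send stable bundles to shifts of stable bundles: for $T$ this is clear, since tensoring by a line bundle preserves inclusions of subsheaves and shifts every slope by $1$; for $\Phi$ it is the statement that on an elliptic curve every semistable sheaf satisfies the weak index theorem and that $\Phi$ interchanges stability while acting on slopes by $\mu\mapsto-1/\mu$ (see \cite{Stable_Bundles}). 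I expect verifying this stability-preservation for $\Phi$ --- in particular checking that one lands in a single cohomological degree and obtains an honest stable bundle rather than a genuine complex --- to be the main technical obstacle.

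Granting this, existence for $\gcd(r,n)=1$ follows: choose $g\in\operatorname{SL}_2(\mb Z)$ with $g\cdot(1,0)=(r,n)$, write $g$ as a word in $T^{\pm1}$ and $\Phi$, and apply the corresponding autoequivalence to a degree-zero line bundle; after a shift this yields a stable bundle of class $(r,n)$. For uniqueness with fixed determinant I would track the map $\det$ from the set of stable bundles of class $(r,n)$ to $\Pic^n(C)$. Transporting the rank-one case through the autoequivalence shows this map is a bijection: each $T^{\pm1}$ and $\Phi$ induces an explicit automorphism of $\Pic$, and in the rank-one case $\det$ is the identity on $\Pic^n(C)$ and hence bijective, so every fiber remains a single isomorphism class. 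Equivalently, if $V_1,V_2$ are stable of class $(r,n)$ then $\chi(V_1,V_2)=0$ and any nonzero morphism of stable bundles of equal slope is an isomorphism, so it only remains to exclude $\Hom(V_1,V_2)=\Hom(V_2,V_1)=0$, which the determinant bookkeeping rules out; carrying out this bookkeeping carefully is the second delicate point.

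Finally, for the necessity of $\gcd(r,n)=1$ I would use that $\operatorname{SL}_2(\mb Z)$ preserves divisibility of lattice vectors, so $\gcd(r,n)$ is invariant under the autoequivalences above. If $d=\gcd(r,n)>1$, then $(r,n)=d\,(r_0,n_0)$ with $(r_0,n_0)$ primitive, and applying a suitable $g$ sending $(r_0,n_0)\mapsto(1,0)$ would reduce a hypothetical stable bundle of class $(r,n)$ to a stable bundle $W$ of rank $d\ge2$ and degree $0$, i.e.\ of slope $0$. But by Atiyah's classification \cite{AtiyahVectorBundles} every indecomposable degree-zero bundle is a twist $F_d\otimes M$ of the unique iterated self-extension $F_d$ of $\O_C$, so it contains a subsheaf isomorphic to $M$, of slope $0=\mu(W)$; since a stable bundle is simple and hence indecomposable, $W$ would then possess a proper subsheaf of slope equal to its own, contradicting the strict inequality in the definition of stability. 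Thus no stable bundle exists unless $d=1$, which completes the proof.
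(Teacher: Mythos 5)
The paper never proves this lemma: it is stated as background, prefaced by \enquote{For the following notions and facts we refer to \cite{Stable_Bundles}}, so there is no in-paper argument to compare yours against --- the relevant comparison is with the classical proofs (Atiyah \cite{AtiyahVectorBundles}, or the Fourier--Mukai reformulation in the literature). On its own terms, your $\operatorname{SL}_2(\mathbb{Z})$/Fourier--Mukai strategy is exactly the standard modern proof and is correct in outline: the matrices you assign to $T$ and $\Phi$ are right, they do generate $\operatorname{SL}_2(\mathbb{Z})$, which acts transitively on primitive vectors, and the two points you flag yourself are indeed where all the content sits. Both are genuine theorems you may legitimately cite rather than reprove: that $\Phi$ carries a stable sheaf to a shift of a stable sheaf (Bridgeland/Polishchuk-style preservation of stability), and that $\det$ is a bijection from stable bundles of coprime class $(r,n)$ to $\Pic^n(C)$, which is precisely Atiyah's Theorem~7. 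Two details deserve more care than your sketch gives them. First, starting existence from the class $(1,0)$ is slightly awkward: the very first application of $\Phi$ to a degree-zero line bundle yields a shifted skyscraper, so you should either order the word in $T^{\pm1}, \Phi$ by the Euclidean algorithm on $(r,n)$ (keeping every intermediate rank positive) or use the preservation theorem in a form covering torsion sheaves --- your primitivity observation does guarantee any rank-zero intermediate class is $(0,\pm1)$, i.e.\ a point sheaf, so this is repairable, but as written it is a gap in the bookkeeping. Second, your fallback uniqueness argument is weaker than you suggest: $\chi(V_1,V_2)=0$ together with $\det V_1 \cong \det V_2$ does not by any soft \enquote{determinant bookkeeping} exclude $\Hom(V_1,V_2)=\Hom(V_2,V_1)=0$; the honest route is the first one you name, transporting the bijectivity of $\det$ through the autoequivalences (each of $T$, $\Phi$ intertwines $\det$ with an explicit automorphism of $\Pic$, and that computation, not the Euler form, closes the argument). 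Your necessity argument --- transporting a hypothetical stable bundle to class $(d,0)$, $d\ge 2$, and contradicting strictness of stability via Atiyah's $F_d\otimes M$ containing the slope-zero subsheaf $M$ --- is correct. In sum: a sound proof modulo the cited stability-preservation theorem, which is where essentially all the difficulty lives; what your approach buys over simply quoting Atiyah, as the paper does, is a conceptual explanation of why coprimality of $(r,n)$ is the operative condition, namely primitivity in the numerical Grothendieck group under the $\operatorname{SL}_2(\mathbb{Z})$-action.
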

    
    \begin{Lemma}
        Let $V$ be a vector bundle of rank $r$ and degree $n$ on an elliptic curve $C$. If $\gcd(r, n) = 1$, then the following properties are equivalent:
        \begin{enumerate}[(i)]
            \item $V$ is stable;
            \item $V$ is simple, i.e. $\End(V) = k$;
            \item $V$ is indecomposable.
        \end{enumerate}
    \end{Lemma}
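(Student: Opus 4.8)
The plan is to prove the cycle of implications $(i)\Rightarrow(ii)\Rightarrow(iii)\Rightarrow(i)$. The first two implications hold for an arbitrary smooth projective curve and use neither the elliptic curve nor the coprimality hypothesis; the condition $\gcd(r,n)=1$ enters only in the last, and hardest, implication, and it is there that I expect the real work to lie.

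For $(i)\Rightarrow(ii)$ I would establish the standard fact that a stable bundle is simple. Given a nonzero $f\in\End(V)$, consider $W=\Im f$, which is at once a quotient and a subsheaf of $V$. If $\Ker f\neq 0$, then $W\simeq V/\Ker f$ is a proper quotient, so stability forces $\mu(W)>\mu(V)$; but $W\subseteq V$ is then a proper subsheaf of strictly smaller rank, so stability also forces $\mu(W)<\mu(V)$, a contradiction. Hence $f$ is injective and $W\simeq V$ has the same rank and degree as $V$, so $W=V$ and $f$ is an isomorphism. Thus $\End(V)$ is a finite-dimensional division algebra over the algebraically closed field $k$, whence $\End(V)=k$. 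For $(ii)\Rightarrow(iii)$, a nontrivial decomposition $V=A\oplus B$ would yield the idempotent endomorphism projecting onto $A$, which is neither $0$ nor $\Id$, contradicting $\End(V)=k$.

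The crux is $(iii)\Rightarrow(i)$, which I would split into two steps. First, coprimality upgrades semistability to stability: if $V'\subsetneq V$ were a nonzero subsheaf with $\mu(V')=\mu(V)=n/r$ and $\rk(V')<r$, then $r\deg(V')=n\rk(V')$ would force $r\mid\rk(V')$, which is impossible, while a proper subsheaf of full rank automatically has strictly smaller degree, hence strictly smaller slope. It therefore remains to prove that an indecomposable $V$ is semistable, and this is where I expect the only genuine difficulty. Arguing by contradiction, I would take the maximal destabilizing subsheaf $V_1\subset V$ (the first term of the Harder--Narasimhan filtration), which is semistable of slope $\mu(V_1)=\mu_{\max}(V)>\mu(V)$, so that $0\neq V_1\neq V$ and the quotient $\bar V=V/V_1$ satisfies $\mu_{\max}(\bar V)<\mu(V_1)$. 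Any $f\colon V_1\to\bar V$ would have image simultaneously a quotient of the semistable $V_1$, hence of slope $\geq\mu(V_1)$, and a subsheaf of $\bar V$, hence of slope $\leq\mu_{\max}(\bar V)<\mu(V_1)$; therefore $\Hom(V_1,\bar V)=0$. The key point is Serre duality on the elliptic curve, where $K_C\simeq\O_C$, which gives
\[
    \Ext^1(\bar V,V_1)\simeq\Hom(V_1,\bar V\otimes K_C)^*=\Hom(V_1,\bar V)^*=0,
\]
so the extension $0\to V_1\to V\to\bar V\to 0$ splits and contradicts the indecomposability of $V$. This closes the cycle and proves the equivalence.
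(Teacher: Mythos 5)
Your proof is correct. The paper itself contains no proof of this lemma --- it is quoted as a standard fact with a pointer to \cite{Stable_Bundles} --- and your argument is exactly the classical one from that literature: the routine implications stable $\Rightarrow$ simple (via $\Im f$ being both a quotient and a subsheaf, plus $\End(V)$ a division algebra over the algebraically closed $k$) and simple $\Rightarrow$ indecomposable, coprimality ruling out a proper subsheaf of equal slope so that semistable $\Rightarrow$ stable, and the Harder--Narasimhan filtration combined with Serre duality on the elliptic curve ($\omega_C\simeq \O_C$, hence $\Ext^1(\bar V, V_1)\simeq \Hom(V_1,\bar V)^*=0$) to split off the maximal destabilizing subsheaf of an unstable bundle, contradicting indecomposability. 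The only point worth making explicit is that $V_1$ is saturated, so $\bar V=V/V_1$ is torsion-free, hence locally free --- this is implicitly used both when you bound slopes of subsheaves of $\bar V$ by $\mu_{\max}(\bar V)$ and when you apply Serre duality to vector bundles.
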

    Thus, stable vector bundles on an elliptic curve are exactly indecomposable vector bundles with coprime rank and degree. For example, any line bundle is stable. The following lemma will also be useful.
    \begin{Lemma}\label{stableHom}
        Let $V$, $W$ be stable vector bundles on a smooth projective curve such that $\mu(V)>\mu(W)$. Then $\Hom(V, W) = 0$.
    \end{Lemma}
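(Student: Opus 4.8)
The plan is to argue by contradiction: suppose there exists a nonzero morphism $\phi\colon V\to W$, and deduce that $\mu(V)\le \mu(W)$, contradicting the hypothesis. The object that ties the two bundles together is the image $\Im(\phi)\subseteq W$, which is a nonzero coherent subsheaf of $W$ and simultaneously a nonzero quotient $V/\Ker(\phi)$ of $V$. Since subsheaves of locally free sheaves on a smooth projective curve are locally free, $\Im(\phi)$ is itself a vector bundle of strictly positive rank, so its slope $\mu(\Im(\phi))$ is defined. I would then show that stability of $V$ bounds $\mu(\Im(\phi))$ from below by $\mu(V)$, while stability of $W$ bounds it from above by $\mu(W)$; chaining these gives $\mu(V)\le \mu(W)$.

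The elementary tool underlying both bounds is the additivity of degree and rank in a short exact sequence $0\to A\to B\to Q\to 0$ of vector bundles, namely $\deg B=\deg A+\deg Q$ and $\rk B=\rk A+\rk Q$, which exhibits $\mu(B)$ as a weighted average of $\mu(A)$ and $\mu(Q)$. Consequently, for a stable bundle $B$, a nonzero proper subsheaf has slope strictly less than $\mu(B)$ precisely when the resulting quotient has slope strictly greater than $\mu(B)$. Applying this to the presentation $\Im(\phi)=V/\Ker(\phi)$: if $\Ker(\phi)=0$ then $\Im(\phi)\cong V$ and $\mu(\Im(\phi))=\mu(V)$, while if $\Ker(\phi)\ne 0$ then $\Ker(\phi)$ is a nonzero proper subsheaf of the stable bundle $V$, so $\mu(\Ker(\phi))<\mu(V)$ and the weighted-average relation yields $\mu(\Im(\phi))>\mu(V)$; in either case $\mu(\Im(\phi))\ge \mu(V)$. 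Dually, viewing $\Im(\phi)$ as a subsheaf of the stable bundle $W$: if $\Im(\phi)=W$ then $\mu(\Im(\phi))=\mu(W)$, and otherwise it is a proper nonzero subsheaf, so stability gives $\mu(\Im(\phi))<\mu(W)$; in either case $\mu(\Im(\phi))\le \mu(W)$.

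Combining the two bounds yields $\mu(V)\le \mu(\Im(\phi))\le \mu(W)$, contradicting $\mu(V)>\mu(W)$ and forcing $\phi=0$, whence $\Hom(V,W)=0$. The only delicate point — and the step I would watch most carefully — is the bookkeeping of the degenerate cases where $\phi$ is injective or surjective: there the strict inequalities coming from stability collapse to equalities, but since the hypothesis $\mu(V)>\mu(W)$ is strict, the resulting non-strict chain $\mu(V)\le \mu(W)$ already suffices to reach a contradiction. No genuinely elliptic input is needed; the statement and argument hold verbatim for any smooth projective curve.
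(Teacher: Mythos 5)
Your argument is correct and complete: the slope comparison via the image $\Im(\phi)$, with stability of $V$ bounding $\mu(\Im(\phi))$ below and stability of $W$ bounding it above, is the standard proof of this fact, and you correctly handle the degenerate cases (where $\phi$ is injective or surjective the strict inequalities collapse, but the non-strict chain $\mu(V)\le\mu(\Im(\phi))\le\mu(W)$ still contradicts the strict hypothesis). For comparison, the paper gives no proof at all --- the lemma is quoted as a known fact with a reference to the literature --- so there is nothing to diverge from; note only that your appeals to local freeness of subsheaves of locally free sheaves on a smooth projective curve and to stability tested against all nonzero proper coherent subsheaves (not merely subbundles) are exactly the conventions the paper itself sets up in its definition of stability, so your proof fits the paper's framework verbatim.
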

    
    \subsection{The Serre duality on elliptic curves}
    
    Let us fix a trivialization $\omega_C\simeq \O_C$ of the canonical line bundle on the elliptic curve $C$. The Serre duality (see e.g. \cite{Hartshorne_AG}) states that for any vector bundle $V$ on $C$ there is a functorial non-degenerate pairing
    \begin{equation*}\label{Serre_duality_1}
        H^0(C, V)\otimes H^1(C, V^*) \to k,
    \end{equation*}
    and, in particular, this gives a canonical isomorphism
    \begin{equation*}
        H^1(C, \O_C) = k.
    \end{equation*}
    For any vector bundles $V_1, V_2$ on $C$ the Serre duality applied to $V = V_1^*\otimes V_2$ gives a functorial non-degenerate pairing
    \begin{equation}\label{Serre_duality_2}
        \Hom(V_1, V_2)\otimes \Ext^1(V_2, V_1)\to k.
    \end{equation}
    The Serre duality pairing is closely connected with the trace map. For a vector bundle $V$ on $C$ there is a natural trace map
    \begin{equation}\label{trace}
        \tr\colon \sEnd(V)\to \O_C,
    \end{equation}
    and it induces trace maps on cohomology:
    \begin{align}
        \tr\colon \End(V)\to H^0(C, \O_C) = k, \label{trace_end}\\
        \tr\colon \Ext^1(V, V)\to H^1(C, \O_C) = k.\label{trace_ext}
    \end{align}
    Denote the kernels of these three trace maps by $\sEnd(V)_0$, $\End(V)_0$ and $\Ext^1(V, V)_0$, respectively.
    The trace map \eqref{trace} and the section $\O_C\xrightarrow{\Id_V} \sEnd(V)$ give a decomposition
    \begin{equation*}
        \sEnd(V) = \O_C\oplus \sEnd(V)_0
    \end{equation*}
    and the corresponding decomposition of cohomology:
    \begin{align*}
        \End(V) = k \oplus \End(V)_0,\\
        \Ext^1(V, V) = k\oplus \Ext^1(V, V)_0.
    \end{align*}
    The following lemma follows from functoriality of the Serre duality.
    \begin{Lemma}
        Let $C$ be an elliptic curve.
        \begin{enumerate}
            \item For a vector bundle $V$ on $C$ the trace map \eqref{trace_ext}
            \begin{equation*}
                \tr\colon \Ext^1(V, V)\to k
            \end{equation*}
            coincides with the map given by the Serre duality pairing \eqref{Serre_duality_2} with $\Id_V\in \End(V)$.
            \item For two vector bundles $V_1, V_2$ on $C$ the Serre duality pairing 
            \begin{equation*}
                \Hom(V_1, V_2)\otimes \Ext^1(V_2, V_1)\to k
            \end{equation*}
            can be obtained in two other ways: take a composition in any order (there are two variants) and then apply the trace map:
            \begin{equation*}
                \xymatrix{
                    \Hom(V_1, V_2)\otimes\Ext^1(V_2, V_1)\ar[r]\ar[d]&\Ext^1(V_1, V_1)\ar[d]^{\tr}\\
                    \Ext^1(V_2, V_2)\ar[r]^{\;\;\tr}& k.
                }
            \end{equation*}
        \end{enumerate}
    \end{Lemma}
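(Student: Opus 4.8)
The guiding principle, as the remark preceding the statement suggests, is the naturality of the Serre duality isomorphism $H^1(C,W)^*\xrightarrow{\sim}H^0(C,W^*)$ in the vector bundle $W$, together with the standard description of the fundamental pairing $H^0(C,W)\otimes H^1(C,W^*)\to k$ as the cup product followed by $H^1$ of the evaluation map $W\otimes W^*\to\O_C$ and the canonical identification $H^1(C,\O_C)=k$. My plan is to reduce both parts to elementary, fiberwise identities for the trace form and then propagate them through cohomology by naturality.

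For part $(1)$, I would apply naturality to the sheaf trace $\tr\colon\sEnd(V)\to\O_C$ of \eqref{trace}. Under the self-duality $\sEnd(V)^*\simeq\sEnd(V)$ furnished by the trace form $(\phi,\psi)\mapsto\tr(\phi\psi)$, the dual of $\tr$ is exactly the unit section $\O_C\to\sEnd(V)$ sending $1\mapsto\Id_V$; this fiberwise computation is the only nonformal input. The naturality square for Serre duality applied to $\tr$ then identifies the functional $\tr\colon\Ext^1(V,V)=H^1(\sEnd V)\to H^1(\O_C)=k$ of \eqref{trace_ext} with the element $\Id_V\in H^0(\sEnd V)=\End(V)$. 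Unwinding this identification says precisely that the Serre pairing \eqref{Serre_duality_2} of $\Id_V$ against any $\xi\in\Ext^1(V,V)$ equals $\tr(\xi)$, which is the assertion.

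For part $(2)$, the Serre pairing $\Hom(V_1,V_2)\otimes\Ext^1(V_2,V_1)\to k$ is the fundamental pairing attached to $W=\sHom(V_1,V_2)$, whose dual is $W^*=\sHom(V_2,V_1)$. It is therefore the cup product $H^0(\sHom(V_1,V_2))\otimes H^1(\sHom(V_2,V_1))\to H^1\bigl(\sHom(V_1,V_2)\otimes\sHom(V_2,V_1)\bigr)$ followed by $H^1$ of the evaluation $\sHom(V_1,V_2)\otimes\sHom(V_2,V_1)\to\O_C$. The heart of the matter is the purely local identity that this evaluation factors in two ways: as composition $\sHom(V_1,V_2)\otimes\sHom(V_2,V_1)\to\sEnd(V_1)$ followed by $\tr$, and as composition into $\sEnd(V_2)$ followed by $\tr$, both equal to the evaluation because $\tr_{V_1}(g\circ f)=\tr_{V_2}(f\circ g)$ fiberwise. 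Since the cup product is compatible with these sheaf morphisms and realizes Yoneda composition on cohomology, each factorization exhibits the Serre pairing as a Yoneda composition into $\Ext^1(V_1,V_1)$, respectively $\Ext^1(V_2,V_2)$, followed by the trace \eqref{trace_ext}; this is exactly the commutative diagram. I would note in passing that part $(1)$ is recovered as the special case $V_1=V_2=V$, $f=\Id_V$, since then $\xi\circ\Id_V=\xi$.

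The hard part will not be any deep geometry but the bookkeeping of conventions: verifying that the cup-product-and-evaluation form of the fundamental Serre pairing is the one compatible with Yoneda composition, and tracking the two factorizations of the evaluation map through $\sEnd(V_1)$ and $\sEnd(V_2)$ so that the cyclicity $\tr_{V_1}(g\circ f)=\tr_{V_2}(f\circ g)$ produces the equality of the two trace maps in the diagram. Once these compatibilities are fixed, everything else is formal naturality of Serre duality.
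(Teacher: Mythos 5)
Your proposal is correct and takes essentially the same approach as the paper: the paper gives no detailed argument beyond the remark that the lemma ``follows from functoriality of the Serre duality,'' and your write-up is exactly a careful elaboration of that remark --- naturality of the duality applied to the sheaf map $\tr\colon\sEnd(V)\to\O_C$, together with the local identities that the dual of $\tr$ is the unit section $1\mapsto\Id_V$ and that $\tr(g\circ f)=\tr(f\circ g)$, with cup product realizing Yoneda composition. Nothing further is needed.
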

    
    Note that if a vector bundle $V$ is stable then 
    $$\dim \Ext^1(V, V) = \dim \End(V) = 1,$$
    and hence the trace map $\Ext^1(V, V)\to k$ is an isomorphism. Then we get a simple and useful corollary.
    
    \begin{Corollary}\label{Serre_Stable}
        Let $V_1$ and $V_2$ be stable vector bundles on an elliptic curve $C$. Then under identifications $\Ext^1(V_1, V_1) = k$ and $\Ext^1(V_2, V_2) = k$ given by the trace maps, the Serre duality pairing
        \begin{equation*}
            \Hom(V_1, V_2)\otimes \Ext^1(V_2, V_1)\to k
        \end{equation*}
        is identified with the composition in any order.
    \end{Corollary}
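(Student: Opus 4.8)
The plan is to prove Corollary \ref{Serre_Stable} as a direct consequence of the preceding Lemma, reducing everything to the trace-map identifications.

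The plan is to deduce this immediately from the second part of the preceding lemma together with the stability hypothesis. First I would recall that the preceding lemma identifies the Serre duality pairing $\Hom(V_1, V_2)\otimes \Ext^1(V_2, V_1)\to k$ with the Yoneda composition landing in $\Ext^1(V_1, V_1)$ followed by the trace map $\tr\colon \Ext^1(V_1, V_1)\to k$, and equally with the composition landing in $\Ext^1(V_2, V_2)$ followed by its trace map. Thus both compositions, each followed by the appropriate trace, already compute one and the same pairing; the commutativity of the square in that lemma is precisely what guarantees that the two orders agree before any identification is made.

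The key additional input is stability. As noted just before the statement, if $V_i$ is stable then $\dim \Ext^1(V_i, V_i) = 1$ and the trace map $\tr\colon \Ext^1(V_i, V_i)\to k$ is an isomorphism. Using this isomorphism as the chosen identification $\Ext^1(V_i, V_i) = k$, the trace map becomes literally the identity map $k\to k$. Substituting this into the two factorizations above, the Serre duality pairing is identified with the bare Yoneda composition $\Hom(V_1, V_2)\otimes \Ext^1(V_2, V_1)\to \Ext^1(V_1, V_1) = k$, and symmetrically with the composition through $\Ext^1(V_2, V_2) = k$. This is exactly the assertion that, under the trace identifications, the pairing is given by the composition in either order.

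There is essentially no obstacle here beyond bookkeeping: the entire content is carried by the functoriality statement of the preceding lemma, while stability is invoked only to ensure that the trace is invertible, so that identifying $\Ext^1(V_i, V_i) = k$ via the trace converts the trace into the identity. The single point worth stating explicitly is that the two orders of composition coincide, but this requires no separate argument since it is already encoded in the commutative square of the preceding lemma.
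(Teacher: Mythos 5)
Your proposal is correct and follows exactly the paper's own (implicit) argument: the corollary is deduced from part~2 of the preceding lemma together with the remark, stated just before the corollary, that stability forces $\dim\Ext^1(V_i,V_i)=1$ so the trace map $\tr\colon\Ext^1(V_i,V_i)\to k$ is an isomorphism, turning each factorization ``composition followed by trace'' into the bare composition under the trace identifications. Nothing is missing, and no different route is taken.
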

    
    \subsection{Poisson structures}
    
    Let $X$ be a smooth algebraic variety over $k$.
    \begin{Definition}\label{definition_Poisson}
        A Poisson structure on $X$ is a $k$-bilinear operation
        \begin{equation}\label{Poisson_bracket}
            \{-,-\}\colon \O_X\times \O_X \to \O_X 
        \end{equation}
        such that for any functions $f, g, h\in \O_X$ the following properties hold:
        \begin{enumerate}
            \item Skew-symmetry:
            $$\{f, g\} = - \{g, f\},$$
            \item Jacobi identity:
            $$\{f, \{g, h\}\} + \{g, \{h, f\}\} + \{h, \{f, g\}\} = 0,$$
            \item Leibniz rule:
            $$\{f, gh\} = \{f, g\}h + g\{f, h\}.$$
        \end{enumerate}
        The pair $(X, \{-, -\})$ is called a smooth Poisson variety.
    \end{Definition}
    Equivalently, one can also view a Poisson structure as a bivector field $\pi\in \Gamma(X, \bigwedge^2(TX))$ such that $[\pi, \pi]=0$, where $[-, -]$ is the Schouten-Nijenhuis bracket. Such bivector field is connected with the bracket from Definition \ref{definition_Poisson} by the formula
    $$\{f, g\} = \pi(df, dg),$$
    and the condition $[\pi, \pi] = 0$ is equivalent to Jacobi identity. Since a bivector field can be viewed as a skew-symmetric map of vector bundles $T^*X\to TX$, a Poisson structure can be also defined as a map of vector bundles
    \begin{equation}\label{Poisson_map}
        \pi\colon T^*X\to TX,
    \end{equation}
    satisfying some additional condition which is equivalent to Jacobi identity. For any point $x\in X$ the Poisson structure \eqref{Poisson_map} induces a map
    $$\pi_x\colon T_x^*X\to T_xX,$$
    and the rank of this map is called the rank of the Poisson structure $\pi$ at the point $x$. Since $\pi_x$ is skew-symmetric, its rank is always even.
    
    \begin{Definition}
        Let $X$ be a smooth algebraic variety and $\pi\colon T^*X\to TX$ a  Poisson structure on $X$. An algebraic symplectic leaf is a maximal connected locally closed subset $Z\subset X$ such that for any point $x\in Z$ the tangent space at $x$ to $Z$ coincides with the image of the Poisson structure:
        \begin{equation}
            T_xZ = \Im(\pi_x)\subset T_xX.
        \end{equation}
    \end{Definition}
    
    In the case $k=\mb{C}$ Frobenius theorem implies that each point of $X$ is contained in an analytic symplectic leaf, which is not necessarily algebraic. Thus, even existence of algebraic symplectic leaves is not guaranteed.
    \begin{Proposition}\label{Proposition_leaf_smooth}
        Let $X$ be a smooth algebraic variety and $\pi\colon T^*X\to TX$ be a Poisson structure on $X$. If $Z\subset X$ is an algebraic symplectic leaf of $\pi$, then $Z$ is a smooth algebraic variety.
    \end{Proposition}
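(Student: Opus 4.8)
The plan is to equip $Z$ with its reduced locally closed subscheme structure and to prove that $Z$ is regular at every point (equivalently smooth, since we work over a field of characteristic $0$). Here $T_xZ$ is understood as the Zariski tangent space of this subscheme, embedded in $T_xX$. The whole argument rests on sandwiching a single integer-valued function on $Z$ between two semicontinuity statements pointing in opposite directions, which forces that function to be locally constant.

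Concretely, set $f(x) = \dim_k T_xZ$ for $x\in Z$. First I would invoke the defining property of a symplectic leaf, $T_xZ = \Im(\pi_x)$, so that $f(x) = \rk(\pi_x)$ for every $x\in Z$. Next I would record the two semicontinuities. On the one hand, for any scheme the Zariski tangent dimension is upper semicontinuous, i.e. $\{x : f(x)\ge r\}$ is closed. On the other hand, for the morphism of vector bundles $\pi\colon T^*X\to TX$ the rank $x\mapsto \rk(\pi_x)$ is lower semicontinuous, the locus $\{\rk(\pi_x)\ge r\}$ being cut out by the non-vanishing of some $r\times r$ minor; restricting to $Z$ preserves lower semicontinuity. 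Since these two functions coincide on $Z$, the function $f$ is simultaneously upper and lower semicontinuous; for an integer-valued function this makes every level set $\{f\ge r\}$ both open and closed, so on the connected set $Z$ the function $f$ is constant, say $f\equiv c$.

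Finally I would translate constancy of $f$ into smoothness. Over a perfect field the regular locus of the reduced scheme $Z$ is open and dense and meets every irreducible component, and at a regular point one has $\dim_k T_xZ = \dim_xZ$; combined with $f\equiv c$ this shows that every irreducible component of $Z$ has dimension $c$, hence $\dim_xZ = c$ for all $x$. Then $\dim_k T_xZ = c = \dim_xZ$ at every point of $Z$, so each local ring $\O_{Z,x}$ is regular and $Z$ is smooth. The step requiring the most care is lining up the directions of the two semicontinuities — the tangent dimension can only jump up along closed loci while the rank can only drop along closed loci — since it is exactly this clash that pins the function down to a constant. A secondary point, handled automatically once $f$ is constant, is that $Z$ is only assumed connected rather than irreducible: were two components to cross, the Zariski tangent space at a crossing point would exceed the local dimension, contradicting $\dim_k T_xZ = \dim_xZ$.
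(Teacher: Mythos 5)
Your proof is correct and follows essentially the same route as the paper's own argument: the identity $\dim_k T_xZ=\rk\pi_x$ on $Z$ pinches the upper semicontinuity of the Zariski tangent dimension against the lower semicontinuity of the rank of the bundle map, forcing constancy on the connected set $Z$ and hence smoothness. You merely spell out two steps the paper leaves implicit --- the use of connectedness from the definition of a symplectic leaf, and the deduction of regularity from constant tangent dimension via density of the regular locus --- both of which are handled correctly.
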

    \begin{proof}
        Since $\dim T_x Z$ is upper semicontinuous for $x\in Z$, $\rk \pi_x$ is lower semicontinuous for $x\in Z$, and $\dim T_x Z = \rk \pi_x$ for $x\in Z$, it follows that $\dim T_x Z$ is constant on $Z$. Thus, $Z$ is smooth.
    \end{proof}
    Throughout the paper by symplectic leaves we always mean algebraic symplectic leaves.
    
    \subsection{First order deformations of coherent sheaves}\label{subsection_deformations}
    
    Denote by $k[\eps]$ the ring of dual numbers $k[t]/t^2$ and denote its spectrum by $D = \Spec k[\eps]$. Let X be an algebraic variety over $k$. Consider two Cartesian squares
    $$
    \xymatrix{
        X\times D\ar[r]^(0.55)q\ar[d]_p & D\ar[d]^{p'} && X\times D\ar[r]^(0.55){q} & D\\
        X\ar[r]^{q'} & pt, && X\ar[r]^{q'}\ar[u]^i & pt,\ar[u]_{i'}
    }
    $$
    where
    \begin{itemize}
        \item $pt = \Spec k$,
        \item $q$ is the natural projection $X\times D\to D$,
        \item $p'$ and $p$ are induced by the natural embedding $k\to k[\eps]$,
        \item $i'$ and $i$ are induced by the map $k[\eps]\to k$, $a + b\eps\mapsto a$.
    \end{itemize}
    Note that morphisms $p', p, i', i$ are affine, morphisms $p', p, q', q$ are flat, and $p\circ i = id_X$.
    
    \begin{Definition}
        Let $E$ be a coherent sheaf on $X$. A first order deformation of $E$ is a coherent sheaf $\tilde E$ on a scheme $X\times D$ such that
        \begin{enumerate}
            \item $\tilde E$ is flat over $D$, i.e. the functor $\tilde E\otimes q^*(-)\colon \Coh(D)\to \Coh(X\times D)$ is exact,
            \item $i^*\tilde E = E$, i.e. the restriction of $\tilde E$ to X is identified with $E$.
        \end{enumerate}
        Two first order deformations $\tilde E_1$ and $\tilde E_2$ of $E$ are equivalent if there is an isomorphism $\tilde E_1\simeq \tilde E_2$ that agrees with identifications $i^*\tilde E_1 = E$ and $i^*\tilde E_2 = E$. The set of equivalence classes of first order deformations is denoted by $\Def(E)$.
    \end{Definition}
    Since we do not consider higher order deformations in this paper, we will usually omit the term "first order". Fix a coherent sheaf $E$ on $X$. There is a well-known bijection between $\Def(E)$ and $\Ext^1(E, E)$ such that zero element in $\Ext^1(E, E)$ corresponds to the trivial deformation $\tilde E = p^*E$ (see e.g. \cite{Hartshorne_DeformationTheory}). Let us also describe this bijection $\Def(E)\to \Ext^1(E, E)$ using derived categories.
    
    Firstly consider the $k[\eps]$-module $k$ where $\eps$ acts trivially and consider the element $\xi\in \Ext_{k[\eps]}^1(k, k)$ corresponding to the extension
    \begin{equation}\label{k_extension}
        \xymatrix @R=.5pc{
            0\ar[r]&k\ar[r]^(0.45)\eps&k[\eps]\ar[r]&k\ar[r]&0.
        }
    \end{equation}
    Starting from a deformation $\tilde E$ of $E$ we are going to construct an element $T\in \Ext^1(E, E)$. Consider the functor 
    \begin{equation}\label{deformation_functor}
        p_*(\tilde E\otimes q^*(-))\colon \Coh(D)\to \Coh(X).
    \end{equation}
    \begin{Lemma}\label{deformation_functor_lemma}
        One has
        \begin{enumerate}
            \item $q^*k = i_*\O_X$,
            \item $\tilde E\otimes q^*k = i_*E$,
            \item $p_*(\tilde E\otimes q^* k) = E$.
        \end{enumerate}
    \end{Lemma}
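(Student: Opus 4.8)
The plan is to prove the three identities in sequence, deriving each from the previous one so that the only genuine computation is the first. For (1), I would first record that the $k[\eps]$-module $k$, on which $\eps$ acts by zero, is $k[\eps]/(\eps)$ and therefore, regarded as a sheaf on $D = \Spec k[\eps]$, coincides with $i'_*\O_{pt}$. The second Cartesian square of the setup identifies $X$ with the fibre product $pt\times_D(X\times D)$, the two projections being $q'$ and $i$. Since $q$ is flat, flat base change applied to this square yields
\[
q^*k = q^*i'_*\O_{pt} = i_*(q')^*\O_{pt} = i_*\O_X,
\]
which is (1); flatness of $q$ guarantees that the base-change map is an isomorphism with no derived correction.

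For (2) I would feed (1) into the projection formula for the closed immersion $i$. Writing $i_*\O_X = \O_{X\times D}/\mathcal I$ for the ideal sheaf $\mathcal I$ of $X$ in $X\times D$, tensoring any sheaf $\mathcal A$ with $i_*\O_X$ simply reduces it modulo $\mathcal I$, so that $\mathcal A\otimes i_*\O_X = i_*(i^*\mathcal A)$ with no flatness hypothesis. Applying this with $\mathcal A = \tilde E$ and invoking the defining property $i^*\tilde E = E$ of a first order deformation gives
\[
\tilde E\otimes q^*k = \tilde E\otimes i_*\O_X = i_*(i^*\tilde E) = i_*E,
\]
which is (2). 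Part (3) is then immediate: applying $p_*$ and using the relation $p\circ i = \Id_X$ from the setup,
\[
p_*(\tilde E\otimes q^*k) = p_*i_*E = (p\circ i)_*E = E.
\]

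The main (indeed only) point requiring care is the first step: one must check that the base-change square is genuinely Cartesian --- which is exactly the content of the second square of the setup --- and that flatness of $q$ legitimises the base-change isomorphism; everything else is formal. If one prefers to avoid base change, the whole lemma can instead be verified locally. Over an affine open $U = \Spec A\subset X$ one has $(X\times D)|_U = \Spec A[\eps]$, the homomorphism $q^*\colon k[\eps]\to A[\eps]$ is the obvious inclusion, and $q^*k = k[\eps]/(\eps)\otimes_{k[\eps]}A[\eps] = A[\eps]/(\eps) = A$ with $\eps$ acting by zero, i.e. $i_*\O_X$; then for the $A[\eps]$-module $M$ representing $\tilde E$ one finds $M\otimes_{A[\eps]}A = M/\eps M = i^*\tilde E = E$, giving (2), and (3) reinterprets this module as $E$ over $A$. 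These computations are compatible with restriction and glue, yielding a self-contained proof in the elementary spirit of the paper. It is worth noting that only the hypothesis $i^*\tilde E = E$ and the flatness of $q$ enter here; flatness of $\tilde E$ over $D$ is not needed for these particular identities.
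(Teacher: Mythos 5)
Your proof is correct and takes essentially the same route as the paper's: flat base change applied to the second Cartesian square (using $k = i'_*\O_{pt}$) for (1), the projection formula together with $i^*\tilde E = E$ for (2), and $p\circ i = \Id_X$ for (3). Your additional local verification over affine opens and your observation that flatness of $\tilde E$ over $D$ is not needed for these identities are both accurate, just more detail than the paper records.
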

    \begin{proof}
        Since the $k[\eps]$-module $k$ is the pushforward $i'_* \O_{pt}$ of the structure sheaf $\O_{pt}$ of the point, the flat base change implies
        \begin{equation*}
            q^* k = q^*i'_*\O_{pt} = i_*q'^*\O_{pt} = i_*\O_X.
        \end{equation*}
        Then by the projection formula
        \begin{equation*}
            \tilde E\otimes q^* k = \tilde E\otimes i_*\O_X = i_*E.
        \end{equation*}
        Finally, $p_*(\tilde E\otimes q^* k) = p_*i_*E = E.$
    \end{proof}
    The functor \eqref{deformation_functor} is exact since $\tilde E$ is flat over $D$, so it naturally extends to the functor
    \begin{equation}\label{deformation_functor_2}
        p_*(\tilde E\otimes q^*(-))\colon \D(k[\eps])\to \D(X).
    \end{equation}
    Applying this functor to the morphism $\xi\colon k\to k[1]$ in $\D(k[\eps])$ and using Lemma \ref{deformation_functor_lemma} we get a morphism 
    $$p_*(\tilde E\otimes q^* \xi)\colon E\to E[1]$$
    in $\D(X)$, i.e. an element of $\Ext^1(E, E)$. In other words, the deformation $\tilde E$ corresponds to the element
    \begin{equation}\label{deformation_element}
        T = p_*(\tilde E\otimes q^*\xi)\in \Ext^1(E, E).
    \end{equation}
    corresponding to the extension
    $$
    \xymatrix @R=.5pc{
        0\ar[r]&E\ar[r]&p_*\tilde E\ar[r]&E\ar[r]&0,
    }$$
    obtained as a tensor product over $k[\eps]$ of $\tilde E$ and the extension \eqref{k_extension}.
    \begin{Lemma}\label{lemma_deformation}
        If $E$ is a locally free sheaf on an algebraic variety $X$ then any deformation $\tilde E$ of $E$ is a locally free sheaf on $X\times D$.
    \end{Lemma}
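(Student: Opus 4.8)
The plan is to verify local freeness of $\tilde E$ stalkwise, reducing the claim to a statement about modules over the local ring $R = \O_{X\times D, x} = \O_{X, x}[\eps]$. Since $X\times D$ and $X$ share the same underlying topological space, for a point $x\in X$ I set $M = \tilde E_x$; this is an $R$-module with $M/\eps M = E_x$ by the identification $i^*\tilde E = E$, and $M$ is flat over $k[\eps]$ because $\tilde E$ is flat over $D$. The goal becomes showing that $M$ is a free $R$-module. The feature I will exploit throughout is that $\eps$ is nilpotent, so the ideal $(\eps)\subset R$ satisfies $(\eps)^2 = 0$; in particular, for any $R$-module $N$ the relation $N = \eps N$ forces $N = \eps(\eps N) = 0$, a Nakayama-type vanishing that requires no finiteness hypothesis.

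First I would choose an $\O_{X,x}$-basis $\bar e_1, \dots, \bar e_r$ of the free module $E_x = M/\eps M$ and lift it to elements $\tilde e_1, \dots, \tilde e_r\in M$. Writing $N$ for the cokernel of the map $R^r\to M$ that sends the standard generators to the $\tilde e_i$, the choice of basis gives $N = \eps N$, hence $N = 0$, so the $\tilde e_i$ generate $M$ and we obtain a surjection $\phi\colon R^r\twoheadrightarrow M$ with some kernel $K$.

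The heart of the argument is to show $K = 0$, and this is where flatness over $D$ enters. I would apply $-\otimes_{k[\eps]} k$ to the short exact sequence $0\to K\to R^r\to M\to 0$. Since $M$ is flat over $k[\eps]$ we have $\operatorname{Tor}_1^{k[\eps]}(M, k) = 0$, so the resulting sequence $0\to K/\eps K\to \O_{X,x}^r\to E_x\to 0$ is exact. By construction the map $\O_{X,x}^r\to E_x$ carries the standard generators to the basis $\bar e_i$, hence is an isomorphism; therefore $K/\eps K = 0$, and the nilpotency of $\eps$ again yields $K = 0$. Thus $\phi$ is an isomorphism and $M\simeq R^r$ is free, which proves that $\tilde E$ is locally free.

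The only genuine obstacle is the passage from flatness over the base $D$ to freeness over the total space $X\times D$: everything else is formal Nakayama bookkeeping, but it is precisely the vanishing of $\operatorname{Tor}_1^{k[\eps]}(M, k)$, guaranteed by condition (1) in the definition of a deformation, that rules out hidden relations among the lifted basis. I expect no difficulty from $X$ being possibly singular, since local freeness of $E$ is assumed and the entire argument is local and module-theoretic.
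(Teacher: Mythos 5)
Your proof is correct, but it takes a genuinely different route from the paper. The paper's proof is a two-line cohomological argument: for any affine open $U\subset X$ one has $\Ext^1_U(E|_U, E|_U) = H^1(U, \sEnd(E)|_U) = 0$, so by the bijection $\Def(E|_U) = \Ext^1_U(E|_U, E|_U)$ established earlier in the same subsection every deformation of $E|_U$ is trivial, i.e.\ $\tilde E|_{U\times D} = (p|_{U\times D})^*(E|_U)$, which is visibly locally free. You instead give a direct stalkwise commutative-algebra argument: over $R = \O_{X,x}[\eps]$ you lift a basis of $E_x = M/\eps M$, use the nilpotency of $\eps$ as a finiteness-free Nakayama to get a surjection $R^r\twoheadrightarrow M$, and then kill the kernel $K$ via the vanishing $\operatorname{Tor}_1^{k[\eps]}(M, k) = 0$ supplied by flatness over $D$ (which indeed follows from the paper's definition of flatness by applying the exact functor $\tilde E\otimes q^*(-)$ to the extension \eqref{k_extension} and passing to stalks). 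Each approach buys something: the paper's argument is shorter and yields strictly more --- local triviality of $\tilde E$ \emph{as a deformation}, not merely local freeness --- but it leans on the classification of first-order deformations by $\Ext^1$; yours is self-contained and elementary, avoids deformation-theoretic input entirely, and in effect reproves the standard pattern ``flat over an Artinian base with free closed fiber implies free''. One small point you should make explicit: freeness of all stalks implies local freeness only because $\tilde E$ is \emph{coherent} (a presentation $\O^r_{U\times D}\to \tilde E|_{U\times D}$ that is an isomorphism at the stalk at $x$ remains one on a neighborhood, since kernel and cokernel are coherent with vanishing stalk); for general quasi-coherent sheaves the stalkwise reduction would fail, so coherence from the definition of a deformation is genuinely used in your first sentence.
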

    \begin{proof}
        For any affine open subset $U\subset X$ there are no non-trivial deformations of $E|_U$ since $\Ext^1_U(E|_U, E|_U) = 0$. Hence, $\tilde E|_{U\times D} = (p|_{U\times D})^*(E|_U)$ and then $\tilde E$ is a locally trivial sheaf. 
    \end{proof}
    
    For locally free sheaves on $X$ there is a convenient description of deformations in terms of Čech cocycles. Let $E$ be a locally free sheaf on $X$. Choose a covering $\{U_i\}_{i\in I}$ of $X$ by affine open subsets $U_i$, choose trivializations of $E$ on $U_i$, and denote the corresponding transition functions by $g_{ij}$ for $i, j\in I$. Let $\tilde E$ be a deformation of $E$. By Lemma \ref{lemma_deformation}, $\tilde E$ can be trivialized on affine open subsets $U_i\times D$ of $X\times D$, and the transition functions $\tilde g_{ij}$ of $\tilde E$ can be written as $\tilde g_{ij} = g_{ij} + \eps \cdot h_{ij}$ for some matrices $h_{ij}$ of functions on $X$. One can check that $h_{ij}$ is a $1$-cocycle on $X$ representing an element of $H^1(X, \sEnd(E)) = \Ext^1(E, E)$ and, in fact, this construction gives the same bijection between deformations of $E$ and $\Ext^1(E, E)$ as we described earlier.
    
    \begin{Lemma}\label{lemma_deformation_tensor}
        \begin{enumerate}
            \item Let $E_1$, $E_2$ be vector bundles on an algebraic variety $X$ and let $\tilde E_1$, $\tilde E_2$ be deformations of $E_1, E_2$ corresponding to elements $T_1\in \Ext^1(E_1, E_1)$, $T_2\in \Ext^1(E_2, E_2)$, respectively. Then $\tilde E_1\otimes \tilde E_2$ is the deformation of $E_1\otimes E_2$ corresponding to $T_1\otimes \Id_{E_2} + \Id_{E_1}\otimes T_2\in \Ext^1(E_1\otimes E_2, E_1\otimes E_2)$.
            \item Let $E$ be a vector bundle on an algebraic variety $X$ and let $\tilde E$ be a deformation of $E$ corresponding to an element $T\in \Ext^1(E, E)$. Then the dual sheaf $\tilde E^*$ on $X\times D$ is the deformation of $E^*$ corresponding to $-T\in \Ext^1(E^*, E^*)=\Ext^1(E, E)$.
            \item Let $E$ be a vector bundle on an algebraic variety $X$ and let $\tilde E$ be a deformation of $E$ corresponding to an element $T\in \Ext^1(E, E)$. Then $\sEnd(\tilde E)$ is the deformation of $\sEnd(E)$ corresponding to $[-, T]\in \Ext^1(\sEnd(E), \sEnd(E))$.
        \end{enumerate}
    \end{Lemma}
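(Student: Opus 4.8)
The plan is to compute directly with the Čech description of deformations set up in the paragraph preceding the statement, since the three operations involved---tensor product, dualization, and $\sEnd$---are functorial and hence well defined on equivalence classes of deformations; it therefore suffices to evaluate them on one choice of transition-function representatives. So I fix a common affine covering $\{U_i\}$ with trivializations, write the transition functions of $E_1,E_2$ as $g^{(1)}_{ij},g^{(2)}_{ij}$ and those of the deformations as $\tilde g^{(1)}_{ij}=g^{(1)}_{ij}+\eps\,h^{(1)}_{ij}$, $\tilde g^{(2)}_{ij}=g^{(2)}_{ij}+\eps\,h^{(2)}_{ij}$, so that $T_1,T_2$ are represented by the $\sEnd$-valued cocycles $\varphi^{(1)}_{ij}=h^{(1)}_{ij}(g^{(1)}_{ij})^{-1}$ and $\varphi^{(2)}_{ij}=h^{(2)}_{ij}(g^{(2)}_{ij})^{-1}$. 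For (1), the bundle $\tilde E_1\otimes\tilde E_2$ has transition functions $\tilde g^{(1)}_{ij}\otimes\tilde g^{(2)}_{ij}$; expanding modulo $\eps^2$ and multiplying by $(g^{(1)}_{ij}\otimes g^{(2)}_{ij})^{-1}$ produces the cocycle $\varphi^{(1)}_{ij}\otimes\Id+\Id\otimes\varphi^{(2)}_{ij}$, which is exactly a representative of $T_1\otimes\Id_{E_2}+\Id_{E_1}\otimes T_2$.

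For (2), the transition functions of $\tilde E^*$ are $(\tilde g_{ij}^{-1})^{\top}$. Expanding the inverse modulo $\eps^2$ gives $\tilde g_{ij}^{-1}=g_{ij}^{-1}-\eps\,g_{ij}^{-1}h_{ij}g_{ij}^{-1}$---this is where the sign is born---and transposing yields the $\eps$-part $-(g_{ij}^{-1})^{\top}h_{ij}^{\top}(g_{ij}^{-1})^{\top}$. Multiplying by the inverse transition function $((g_{ij}^{-1})^{\top})^{-1}=g_{ij}^{\top}$ of $E^*$ collapses this to the $\sEnd(E^*)$-cocycle $-(h_{ij}g_{ij}^{-1})^{\top}=-\varphi_{ij}^{\top}$. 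It then remains to observe that the canonical isomorphism $\sEnd(E)\simeq\sEnd(E^*)$, under which $\Ext^1(E^*,E^*)$ is identified with $\Ext^1(E,E)$, is precisely the transpose $\varphi\mapsto\varphi^{\top}$; hence $-\varphi_{ij}^{\top}$ represents $-T$.

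Statement (3) follows by combining (1) and (2) through $\sEnd(\tilde E)=\tilde E\otimes\tilde E^*$, or, more transparently, by repeating the cocycle computation for the adjoint transition functions $\Phi\mapsto\tilde g_{ij}\Phi\,\tilde g_{ij}^{-1}$ of $\sEnd(\tilde E)$. Expanding modulo $\eps^2$ and composing with the inverse $\Psi\mapsto g_{ij}^{-1}\Psi g_{ij}$ of the adjoint transition of $\sEnd(E)$ makes the two $\eps$-terms collapse into the single commutator operator $\Psi\mapsto[\varphi_{ij},\Psi]$, i.e.\ the $\sEnd(\sEnd(E))$-cocycle given by the adjoint action of $\varphi_{ij}$. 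This is precisely the image of $T$ under the map $\Ext^1(E,E)\to\Ext^1(\sEnd(E),\sEnd(E))$ induced by bracketing with $T$, which is the claimed class $[-,T]$.

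The routine part throughout is the first-order bookkeeping of these expansions. The one genuinely delicate point is keeping the sign and variance conventions consistent: the minus sign produced by inverting $\tilde g_{ij}$ in (2), the identification of the transpose with the canonical isomorphism $\sEnd(E)\simeq\sEnd(E^*)$, and the precise slot of the commutator in (3) must all be matched against the conventions fixed in the Čech description above, so that the signs come out as stated rather than opposite. A cleaner but less elementary alternative would argue entirely inside the derived category using the description $T=p_*(\tilde E\otimes q^*\xi)$, deducing (1) from a Leibniz-type property of $\xi$ under $\otimes_{k[\eps]}$ and then obtaining (2) and (3) formally; I expect the Čech computation to be the shorter route here.
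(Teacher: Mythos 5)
Your proposal is correct and takes essentially the same route as the paper: the paper proves part (1) by exactly this transition-function computation ($\tilde g_{ij}=g_{ij}+\eps h_{ij}$, expand the tensor product to first order) and dismisses (2) and (3) as ``similar,'' which you carry out explicitly. One pedantic remark: your cocycle for (3) is $\Psi\mapsto[\varphi_{ij},\Psi]$, i.e.\ $[T,-]$, which matches the stated class $[-,T]$ only up to an overall sign --- but this tension is already present in the paper's own statement given its conventions in (1) and (2), and it is immaterial for the application, since $[-,-]$ and $-[-,-]$ define isomorphic Lie algebra structures on $\End(E)_0$.
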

    \begin{proof}
        We will prove only the first statement, other statements are similar. Take an affine open covering $\{U_i\}$ of $X$ and let $g^1_{ij}$ and $g^2_{ij}$ be transition functions of $E_1$ and $E_2$, respectively. Then transition functions of deformations $\tilde E_1$ and $\tilde E_2$ can be written as $g^1_{ij}+\eps h^1_{ij}$ and $g^2_{ij}+\eps h^2_{ij}$, where Čech cocycles $h^1_{ij}$ and $h^2_{ij}$ represent elements $T_1$ and $T_2$. Then the tensor product $\tilde E_1\otimes \tilde E_2$ has transition functions
        $$(g^1_{ij}+\eps h^1_{ij})\otimes (g^2_{ij}+\eps h^2_{ij}) = g^1_{ij}\otimes g^2_{ij} + \eps (h^1_{ij}\otimes g^2_{ij} + g^1_{ij}\otimes h^2_{ij}),$$
        and the cocycle $h^1_{ij}\otimes g^2_{ij} + g^1_{ij}\otimes h^2_{ij}$ represents the element $T_1\otimes \Id_{E_2}+\Id_{E_1}\otimes T_2$.
    \end{proof}
    
    \section{Feigin--Odesskii Poisson Structures}\label{section_FO_brackets}
    
    \subsection{The spectral sequence}\label{subsection_spectral_sequence}
    
    Our definition of Feigin--Odesskii Poisson structures is based on the following general spectral sequence.
    
    Let $R$ be a commutative ring and $\mathcal A$ be an $R$-linear abelian category with enough injective objects. Consider two extensions
    \begin{align}
        &\xymatrix{
            0\ar[r]&A'\ar[r]^{i_A}&A\ar[r]^{\pi_A}&A''\ar[r]&0,
        }\label{extension1}\\
        &\xymatrix{
            0\ar[r]&B'\ar[r]^{i_B}&B\ar[r]^{\pi_B}&B''\ar[r]&0
        }\label{extension2}
    \end{align}
    in $\mathcal A$ and the corresponding elements $\phi \in \Ext^1(A'', A')$, $\psi\in \Ext^1(B'', B')$. We start with the following question: how to describe $\Ext^\bullet(A, B)$ in terms of $\Ext^\bullet(A', B')$, $\Ext^\bullet(A', B'')$, $\Ext^\bullet(A'', B')$, $\Ext^\bullet(A'', B'')$. For example, if extensions $\phi$ and $\psi$ are trivial 
    then we can view elements of $\Ext^k(A, B)$ as $2\times 2$-matrices
    \begin{equation}\label{2matrices}
        \begin{pmatrix}
            u^k& t^k\\
            s^k& v^k
        \end{pmatrix},\;\text{ where }
        \begin{matrix}
            &u^k\in \Ext^k(A', B'), &\quad t^k\in \Ext^k(A'', B'),\\
            &s^k\in \Ext^k(A', B''), &\quad v^k\in \Ext^k(A'', B'').
        \end{matrix}
    \end{equation}
    In the general case there is a natural spectral sequence computing $\Ext^\bullet(A, B)$ with such $2\times 2$-matrices on the first page. The spectral sequence can be constructed as follows. Short exact sequences \eqref{extension1}, \eqref{extension2} give two term filtrations on $A$ and $B$, and one can use horseshoe lemma to construct injective resolutions $I_A^\bullet$, $I_B^\bullet$ of $A$ and $B$ with corresponding filtations on them. Then $\Ext^\bullet(A, B)$ can be computed as cohomology of the $\Hom$-complex $\CHom^\bullet(I_A^\bullet, I_B^\bullet)$ and filtrations on $I_A^\bullet$ and $I_B^\bullet$ induce a filtration on this complex, so we get a spectral sequence computing $\Ext^\bullet(A, B)$. A straightforward computation gives an explicit description of this spectral sequence. We provide only the final answer.
    
    The first page $E_1^{\bullet, \bullet}$ of the spectral sequence looks as follows.
    $$
    \xymatrix{
        \ar@{-}@<4ex>[ddddddd]  &      \hphantom{---------}       &    \hphantom{---------}    &  \hphantom{---------}   &         \\
        \vdots  &      \vdots       &    \vdots    &  \vdots   &           \\
        3  &  \Ext^2(A',B'')\ar[r]^{(-1)\cdot \psi\circ -}_{-\circ\phi}  & \vertsum{\Ext^3(A', B')}{\Ext^3(A'',B'')} \ar[r]^{-\circ\phi}_{\psi\circ -}& \Ext^4(A'', B')  &\\
        2  &  \Ext^1(A', B'')\ar[r]^{\psi\circ -}_{-\circ\phi} & \vertsum{\Ext^2(A', B')}{\Ext^2(A'',B'')}\ar[r]^{-\circ\phi}_{(-1)\cdot \psi\circ -} &    \Ext^3(A'',B')      &        \\
        1  &  \Hom(A',B'')\ar[r]^{(-1)\cdot \psi\circ -}_{-\circ\phi}  & \vertsum{\Ext^1(A', B')}{\Ext^1(A'',B'')} \ar[r]^{-\circ\phi}_{\psi\circ -}& \Ext^2(A'', B')  &\\
        0  &       & \vertsum{\Hom(A', B')}{\Hom(A'',B'')}\ar[r]^{-\circ\phi}_{(-1)\cdot \psi\circ -} & \Ext^1(A'', B')  &\\
        -1  &       &  & \Hom(A'', B')  &\\
        \ar@{-}@<4ex>[rrrr]  & -1             & 0      & 1&
    }
    $$
    This spectral sequence degenerates on the third page, so $E_3^{p, q}=E_\infty^{p, q}$ is isomorphic to the $p$-th associated quotient of $\Ext^{p+q}(A, B)$. Note that diagonals on the first page indeed consist of $2\times 2$-matrices \eqref{2matrices}.
    
    On the second page for $k\ge 0$ the differential $d_2\colon E_2^{-1, k + 1}\to E_2^{1, k}$ or, more explicitly,
    \begin{align*}
        &d_2\colon \{s\in \Ext^k(A', B'')\mid \psi\circ s = 0, \;\; s\circ \phi = 0\} \to \frac{\Ext^{k+1}(A'', B')}{\Ext^k(A', B')\circ\phi + \psi\circ \Ext^k(A'', B'')}
    \end{align*}
    can be expressed using the triple Massey product $MP$: (see \cite{GelfandManin, PolishchukHua2020})
    \begin{equation}
        d_2(s) = MP(\phi, s, \psi).
    \end{equation}
    Let us also provide two explicit constructions of $d_2$, which follow from the definition of Massey product. For $s\in \Ext^k(A', B'')$ such that $s\circ \phi = 0$ and $\psi\circ s = 0$ consider the following diagram with two distinguished triangles in the derived category $\D^b(\mathcal A)$:
    $$
    \xymatrix{
        &A\ar[ld]_{\pi_A}&&&&B\ar[ld]_{\pi_B}&\\
        A''\ar[rr]^\phi_{[1]}&& A'\ar[rr]^s_{[k]}\ar[lu]_{i_A} && B''\ar[rr]^\psi_{[1]}&& B' \ar[lu]_{i_B}.
    }
    $$
    
    The first construction is the following. Since $\psi\circ s = 0$, there is a lift $s'\in \Ext^k(A', B)$ such that $s = \pi_B\circ s'$. Consider the composition $s'\circ \phi$ and note that
    $$\pi_B\circ (s'\circ \phi) = s\circ \phi = 0.$$
    Hence, there is a lift $s''\in \Ext^{k+1}(A'', B')$ such that $i_B\circ s'' = s'\circ \phi$, and then $d_2(s)$ is the class of $s''\in \Ext^{k+1}(A'', B')$. 
    
    The second construction is very similar. Since $s\circ \phi = 0$, there is a lift $s'\in \Ext^k(A, B'')$ such that $s'\circ i_A = s$. Consider the composition $\psi \circ s'$ and note that
    $$(\psi\circ s')\circ i_A = \psi\circ s = 0.$$
    Hence, there is a lift $s''\in \Ext^{k+1}(A'', B')$ such that $s''\circ\pi_A = \psi\circ s'$, and then $d_2(s)$ is the class of $-s''\in \Ext^{k+1}(A'', B')$.
    
    \subsection{The moduli space of filtered vector bundles}
    \label{subsection_moduli_space}
    
    Let $V$ be a stable vector bundle of rank $r >0$ and degree $n>0$ on an elliptic curve $C$. Consider extensions of $V$ by the trivial line bundle $\O_C$, i.e. short exact sequences
    \begin{equation}\label{extension}
        \xymatrix{
            0\ar[r]&\O_C\ar[r]^i&E\ar[r]^\pi&V\ar[r]&0.
        }
    \end{equation}
    As in \cite{feigin1995vector}, we consider the following moduli space of filtered vector bundles.
    \begin{Proposition}
        The projective space $P = \mb{P}\Ext^1(V, \O_C)$ is the moduli space of filtered vector bundles $E\supset L\supset 0$ with fixed associated quotients
        \begin{equation}\label{moduli_iso}
            E/L \simeq V,\quad L\simeq \O_C.
        \end{equation}
        Isomorphisms \eqref{moduli_iso} are not specified, i.e. they are not a part of the data of a filtered vector bundle. Moreover, we throw away the trivial filtered vector bundle $V\oplus \O_C\supset \O_C\supset 0$.
    \end{Proposition}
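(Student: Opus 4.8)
The plan is to exhibit a bijection between the points of $P=\mb P\Ext^1(V,\O_C)$ and the isomorphism classes of the filtered vector bundles in question, and only then to promote it to the asserted identification of moduli spaces. In one direction, I send a nonzero class $\phi\in\Ext^1(V,\O_C)$, represented by an extension \eqref{extension}, to the filtered bundle $E\supset i(\O_C)\supset 0$; that is, I set $L=i(\O_C)$ and simply forget the chosen identifications of the sub- and quotient-bundles with $\O_C$ and $V$. In the other direction, given a filtered bundle $E\supset L\supset 0$ with $L\simeq\O_C$ and $E/L\simeq V$, I form the canonical extension $0\to L\to E\to E/L\to 0$, choose isomorphisms $\alpha\colon L\xrightarrow{\sim}\O_C$ and $\beta\colon E/L\xrightarrow{\sim}V$, and read off the resulting class in $\Ext^1(V,\O_C)$. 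Throughout I would use the standard fact, already implicit in the excerpt, that $\Ext^1(V,\O_C)$ is exactly the set of extensions \eqref{extension} modulo isomorphisms of extensions (those inducing the identity on $\O_C$ and on $V$), with the zero class corresponding to the split extension.

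The heart of the argument is to check that forgetting the identifications of $L$ with $\O_C$ and of $E/L$ with $V$ corresponds precisely to passing to the projectivization, i.e. to quotienting by the scaling action of $k^*$. Here I would invoke two automorphism computations: $\Aut(\O_C)=k^*$, since $\Hom(\O_C,\O_C)=H^0(C,\O_C)=k$; and $\Aut(V)=k^*$, since $V$ is stable, hence simple, so $\End(V)=k$. Replacing $\alpha$ by $\lambda\alpha$ and $\beta$ by $\mu\beta$ for $\lambda,\mu\in k^*$ changes the class in $\Ext^1(V,\O_C)$ only by a nonzero scalar, because $\Ext^1(V,\O_C)$ is functorial in each argument and these automorphisms act there by scalar multiplication; hence a filtered bundle determines a well-defined point of $\mb P\Ext^1(V,\O_C)$, independent of $\alpha,\beta$. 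Conversely, if two extensions yield isomorphic filtered bundles, an isomorphism $f\colon E\xrightarrow{\sim}E'$ carrying $L$ onto $L'$ restricts to an automorphism of $\O_C$ and induces an automorphism of $V$, both scalars by the same computation, so functoriality of $\Ext^1$ forces the two classes to be proportional. This makes the two constructions descend to mutually inverse bijections. The excluded trivial filtered bundle $V\oplus\O_C\supset\O_C\supset 0$ is precisely the one whose canonical extension splits, i.e. the one sent to the zero class, which is not a point of $\mb P\Ext^1(V,\O_C)$; thus removing it on the moduli side matches deleting $0$ under projectivization.

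Finally, to upgrade this bijection on isomorphism classes to the identification of moduli spaces, I would construct the universal family over $C\times P$. The tautological inclusion $\O_P(-1)\hookrightarrow \Ext^1(V,\O_C)\otimes\O_P$ corresponds to the identity element of $\Ext^1(V,\O_C)\otimes\Ext^1(V,\O_C)^*$, and by the Künneth formula (using $\Hom(V,\O_C)=0$ from Lemma \ref{stableHom}) this group is identified with $\Ext^1_{C\times P}(V\boxtimes\O_P(-1),\,\O_C\boxtimes\O_P)$; the associated universal extension
\begin{equation*}
0\to \O_C\boxtimes\O_P\to \mathcal E\to V\boxtimes\O_P(-1)\to 0
\end{equation*}
is a flat family of bundles $E$ together with the universal subbundle $L=\O_C\boxtimes\O_P$, and, after forgetting trivializations, a family of filtered bundles inducing the above bijection on closed points. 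I expect the main delicate point to be the injectivity direction: verifying that an abstract isomorphism of filtered bundles can only reparametrize $\O_C$ and $V$ by scalars, and hence forces proportionality of the classes. This is exactly where stability of $V$, through $\End(V)=k$, enters essentially — the surjectivity of the $k^*$-action onto a full line is automatic, since rescaling $\beta$ alone already realizes every scalar multiple of the class.
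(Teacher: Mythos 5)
Your argument is correct, and its core coincides with the paper's own proof: both use the same two constructions (send $\phi$ to $E\supset i(\O_C)\supset 0$; conversely choose identifications of $L$ with $\O_C$ and of $E/L$ with $V$), and both rest on the same key fact that $\O_C$ and the stable bundle $V$ are simple, so any two choices of identification differ by nonzero scalars and the class in $\Ext^1(V,\O_C)$ is well defined exactly up to proportionality; your matching of the excluded trivial filtered bundle with the deleted zero class is also the paper's implicit bookkeeping. You differ from the paper in two places, both to your credit. First, the paper dismisses the inverse-bijection check with ``it is clear that the described constructions are inverse to each other,'' whereas you spell out the injectivity direction: a filtered isomorphism $f\colon E\to E'$ with $f(L)=L'$ induces automorphisms of $\O_C$ and of $V$, necessarily scalars, so functoriality of $\Ext^1$ forces proportionality of the classes. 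Second, the paper delegates the universal family entirely to \cite{UniversalFamiliesofExtensions}, while you construct it: your K\"unneth identification is sound, since $H^1(P,\O_P(1))=0$ kills one summand and $\Hom(V,\O_C)=0$ (Lemma \ref{stableHom}) both handles the other and guarantees base change for the relative $\Ext^1$, so the universal extension restricts over $\langle\phi\rangle$ to an extension whose class spans $\langle\phi\rangle$. One small point to flag: your family has quotient $V\boxtimes\O_P(-1)$ rather than the pullback of $V$, so for $P$ to corepresent the functor the moduli problem must be read as fixing the associated quotients only up to isomorphism fiberwise (equivalently, up to twist by line bundles pulled back from the base); this is precisely the reading intended by the proposition's stipulation that the isomorphisms \eqref{moduli_iso} are not part of the data, and it is harmless for the tangent-space computations in the paper since line bundles on $\Spec k[\eps]$ are trivial.
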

    \begin{proof}
        Take a point $\langle \phi\rangle\in P$, where a non-zero element $\phi\in \Ext^1(V, \O_C)$ corresponds to an extension of the form \eqref{extension}. We set $L$ to be the image of $\O_C$ in $E$ and $E\supset L\supset 0$ to be the corresponding filtrarion. Note that if we replace $\phi$ by $\lambda\cdot \phi$ for some non-zero $\lambda\in k$ then the corresponding extension will be 
        $$\xymatrix{
            0\ar[r]&\O_C\ar[r]^{\lambda\cdot i}&E\ar[r]^\pi&V\ar[r]&0,
        }
        $$
        and the image of $\O_C$ in $E$ will not change. Hence, the filtered vector bundle $E\supset L\supset 0$ depends only on the proportionality class of $\phi$.
        
        Conversely, consider a filtered vector bundle $E\supset L\supset 0$ and fix some isomorphisms 
        $E/L\simeq V$ and $L\simeq \O_C$. Using these isomorphisms we obtain an extension of the form \eqref{extension} corresponding to some non-zero element $\phi\in \Ext^1(V, \O_C)$. Since $V$ and $\O_C$ are a stable vector bundles, all the isomorphisms $E/L\simeq V$ and $L\simeq \O_C$ differ by multiplication by non-zero constants. Hence, for different choices of isomorphisms we obtain extensions of the form
        \begin{equation}\label{extensions_proportional}
            \xymatrix{
                0\ar[r]&\O_C\ar[r]^{\lambda\cdot i}&E\ar[r]^{\mu\cdot \pi}&V\ar[r]&0
            }
        \end{equation}
        for non-zero $\lambda, \mu\in k$. Since elements of $\Ext^1(V, \O_C)$ corresponding to extensions \eqref{extensions_proportional} are proportional to $\phi$, the proportionality class of $\phi$ does not depend on the choice of isomorphisms.
        
        It is clear that described constructions are inverse to each other. Existence of the universal family on $P=\mb{P}\Ext^1(V, \O_C)$ is shown in \cite{UniversalFamiliesofExtensions}.  
    \end{proof}
    The following proposition shows that $P$ is the $(n-1)$-dimensional projective space.
    \begin{Proposition}\label{proposition_dimensions}
        Let $V$ be a stable vector bundle of degree $n > 0$ on an elliptic curve $C$. Then
        \begin{enumerate}
            \item $\Hom(V, \O_C) = \Ext^1(\O_C, V) = 0,$
            \item $\dim \Ext^1(V, \O_C) =\dim \Hom(\O_C, V) = n.$
        \end{enumerate}
    \end{Proposition}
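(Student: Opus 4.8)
The plan is to deduce everything from the stability hypothesis by combining Lemma \ref{stableHom}, Serre duality, and the Riemann--Roch formula. The key preliminary observation is that $\O_C$ is itself stable, being a line bundle, with slope $\mu(\O_C) = 0$, whereas $\mu(V) = n/r > 0$ since $n > 0$ and $r > 0$. Thus $\mu(V) > \mu(\O_C)$, which is exactly the configuration needed to apply the vanishing lemma.

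First I would establish part (1). Since $V$ and $\O_C$ are both stable with $\mu(V) > \mu(\O_C)$, Lemma \ref{stableHom} immediately gives $\Hom(V, \O_C) = 0$. To obtain $\Ext^1(\O_C, V) = 0$, I would invoke the Serre duality pairing \eqref{Serre_duality_2} with $V_1 = V$ and $V_2 = \O_C$: the pairing $\Hom(V, \O_C)\otimes \Ext^1(\O_C, V)\to k$ is non-degenerate, and since its first factor vanishes, non-degeneracy forces the second factor to vanish as well.

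For part (2), I would again use \eqref{Serre_duality_2}, this time with $V_1 = \O_C$ and $V_2 = V$: the non-degenerate pairing $\Hom(\O_C, V)\otimes \Ext^1(V, \O_C)\to k$ exhibits these two spaces as mutually dual, so they have the same dimension. It then remains to compute one of them, which I would do via the Euler characteristic on the elliptic curve, where $g = 1$:
$$\chi(V) = \dim H^0(C, V) - \dim H^1(C, V) = \deg(V) + \rk(V)(1 - g) = n.$$
Identifying $H^0(C, V) = \Hom(\O_C, V)$ and $H^1(C, V) = \Ext^1(\O_C, V)$, and feeding in the vanishing $\Ext^1(\O_C, V) = 0$ from part (1), I conclude $\dim \Hom(\O_C, V) = \chi(V) = n$, hence also $\dim \Ext^1(V, \O_C) = n$.

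There is no serious obstacle here; the proof is a direct assembly of the preliminaries, and the only points requiring mild care are matching the two arguments of the Serre duality pairing to the correct $\Ext$-groups and confirming that non-degeneracy of a bilinear pairing with one vanishing factor forces the other factor to vanish.
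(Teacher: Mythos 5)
Your proposal is correct and follows essentially the same route as the paper's own proof: Lemma \ref{stableHom} plus Serre duality for part (1), and Riemann--Roch combined with the vanishing of $\Ext^1(\O_C, V)$ and Serre duality for part (2). The extra detail you supply (stability of $\O_C$, the explicit Euler characteristic computation with $g=1$) is just a spelled-out version of what the paper leaves implicit.
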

    \begin{proof}
        Since $\mu(V) > 0 = \mu(\O_C)$ and vector bundles $V$ and $\O_C$ are stable, it follows from Lemma \ref{stableHom} that $\Hom(V, \O_C) = 0$ and then, by the Serre duality, $\Ext^1(\O_C, V) = 0$. For the second part we apply the Riemann-Roch theorem:
        \begin{align*}
            \dim \Hom(\O_C, V) - \dim \Ext^1(\O_C, V) = n.
        \end{align*}
        Hence, $\dim \Hom(\O_C, V) = n$ and, by the Serre duality, $\dim \Ext^1(V, \O_C) = n$.
    \end{proof}
    
    Now let us describe tangent spaces to $P$. On the one hand, $P = \mb{P}\Ext^1(V, \O_C)$ is a projective space, so for a non-zero element $\phi\in \Ext^1(V, \O_C)$ there is an identification
    $$T_\phi P = \Ext^1(V, \O_C)/\langle \phi\rangle,$$
    of degree 1 in $\phi$.
    
    On the other hand, we can describe tangent spaces to $P$ in terms of first order deformations. Recall from Section \ref{subsection_deformations} that we write $D = \Spec k[\eps]$ for the spectrum of dual numbers, and we have natural morphisms $p\colon C\times D\to C$ and $i\colon C\to C\times D$ such that $p\circ i = id_X$. The universal property of $P$ leads to the following definition.
    \begin{Definition}\label{definition_filtered_deformation}
        Let $E\supset L\supset 0$ be a filtered vector bundle on $C$ such that $E/L\simeq V$ and $L\simeq \O_C$. A (first order) deformation of $E\supset L\supset 0$ is a filtered coherent sheaf $\tilde E\supset \tilde L\supset 0$ on $C\times D$ such that
        \begin{enumerate}
            \item $i^*\tilde E = E$ and $i^*\tilde L = L$,
            \item $\tilde E / \tilde L\simeq p^*V$ and $\tilde L\simeq p^*\O_C$, i.e. associated quotients deform trivially.
        \end{enumerate}
        The set of equivalence classes of deformations of $E\supset L\supset 0$ is denoted by $\Def(E\supset L\supset 0)$.
    \end{Definition}
    Thus, if $E\supset L\supset 0$ is a filtered vector bundle corresponding to a point $\langle\phi\rangle\in P$ then the tangent space $T_\phi P$ to $P$ consists of the deformations of $E\supset L\supset 0$:
    $$T_\phi P = \Def(E\supset L\supset 0).$$
    Note that if $\tilde E\supset \tilde L\supset 0$ is a deformation of $E\supset L\supset 0$ then $\tilde L$ and $\tilde E$ are automatically flat over $D$. Hence, in this case $\tilde E$ is a deformation of $E$, and we get a map
    \begin{equation*}
        \Def(E\supset L\supset 0)\to \Def(E)
    \end{equation*}
    that forgets about $\tilde L$.
    
    \subsection{Definition of Feigin--Odesskii Poisson structures}
    \label{subsection_FO_definition}
    
    We start from a stable vector bundle $V$ of degree $n > 0$ on an elliptic curve $C$ and consider proportionality classes of extensions
    $$
    \xymatrix{
        0\ar[r] &\O_C\ar[r] &E\ar[r] &V\ar[r] &0,
    }
    $$
    which are parameterized by $P=\mb{P}\Ext^1(V, \O_C)$. We will define the Feigin--Odeskii Poisson structure $\pi$ on $P$ as a morphism of vector bundles
    $$
    \pi\colon T^*P\to TP.
    $$
    Since $P$ is a projective space, for any non-zero $\phi\in \Ext^1(V, \O_C)$ we have the identification
    $$T_\phi P = \Ext^1(V, \O_C) / \langle \phi \rangle,$$
    of degree 1 in $\phi$, and using the Serre duality pairing
    $$\langle-, -\rangle\colon \Hom(\O_C, V)\otimes \Ext^1(V, \O_C)\to k$$
    we can write
    $$T^*_\phi P = \langle \phi\rangle^\perp \subset \Hom(\O_C, V).$$
    Fix a non-zero element $\phi\in \Ext^1(V, \O_C)$ corresponding to an extension
    \begin{equation}\label{extension_2}
        \xymatrix{
            0\ar[r] &\O_C\ar[r] &E\ar[r] &V\ar[r] &0
        }
    \end{equation}
    and consider the corresponding filtered vector bundle $E\supset L\supset 0$. We can apply to the extension \eqref{extension_2} the construction of the spectral sequence computing $\Ext^\bullet(E, E)$ that was described in Section \ref{subsection_spectral_sequence}.
    
    Since $\Hom(V, \O_C) = 0$, $\Ext^1(\O_C, V)=0$ by Proposition \ref{proposition_dimensions} and all higher $\Ext$'s are zero, the first page $E_1^{\bullet, \bullet}$ of the spectral sequence looks as follows.
    
    \begin{equation}\label{FO_spectral_sequence_nonreduced}
        \xymatrix{
            \ar@{-}@<4ex>[ddd]  &      \hphantom{--------}       &    \hphantom{--------}    &  \hphantom{--------}   &         \\
            1  &  \Hom(\O_C, V)\ar[r]^{(-1)\cdot \phi\circ -}_{-\circ\phi}  & \vertsum{\Ext^1(\O_C, \O_C)}{\Ext^1(V, V)}&   &\\
            0  &       & \vertsum{\Hom(\O_C, \O_C)}{\Hom(V, V)}\ar[r]^{-\circ\phi}_{(-1)\cdot \phi\circ -} & \Ext^1(V, \O_C)  &\\
            \ar@{-}@<4ex>[rrrr]  & -1             & 0      & 1&
        }
    \end{equation}
    The diagonal $p + q = 0$ corresponds to $\End(E)$ and the diagonal $p + q = 1$ corresponds to $\Ext^1(E, E)$. 
    
    There is another way to construct the same spectral sequence. For a filtered vector bundle $E\supset L\supset 0$ consider the sheaf $\sEnd(E)$ with the induced three term filtration on it, and this filtration gives a spectral sequence computing $H^\bullet(C, \sEnd(E)) = \Ext^\bullet(E, E)$. Moreover, one can use the sheaf $\sEnd(E)_0$ instead of $\sEnd(E)$ to get the reduced spectral sequence computing $\Ext^\bullet(E, E)_0$. Since $\sEnd(E) = \sEnd(E)_0\oplus \O_C$, the reduced spectral sequence is the traceless part of the initial spectral sequence \eqref{FO_spectral_sequence_nonreduced} and, moreover, it is a direct summand in it.
    
    Using Corollary \ref{Serre_Stable} we can write the first page of the reduced spectral sequence as follows.
    \begin{equation}\label{FO_definition_spectral_sequence}
        \xymatrix{
            \ar@{-}@<4ex>[ddd]  &      \hphantom{----}       &    \hphantom{----}    &  \hphantom{----}   &         \\
            1  &  \Hom(\O_C, V)\ar[r]^(0.65){\langle \phi, -\rangle}  & k&   &\\
            0  &       & k\ar[r]^(0.35){\cdot \phi} & \Ext^1(V, \O_C)  &\\
            \ar@{-}@<4ex>[rrrr]  & -1             & 0      & 1&
        }
    \end{equation}
    The only non-trivial differential on the second page is a map
    $$d_2\colon \langle \phi\rangle^\perp\to \Ext^1(V, \O_C)/\langle \phi\rangle,$$
    and this map is the same for both spectral sequences \eqref{FO_spectral_sequence_nonreduced} and \eqref{FO_definition_spectral_sequence}.
    \begin{Definition}
        The Feigin--Odesskii Poisson structure $\pi$ on $P = \mb{P}\Ext^1(V, \O_C)$ at a point $\langle\phi\rangle$ is defined as the differential $d_2$ on the second page of the constructed spectral sequence \eqref{FO_definition_spectral_sequence}:
        $$\pi_\phi\colon T^*_\phi P = \langle\phi\rangle^\perp\xrightarrow{d_2} \Ext^1(V, \O_C)/\langle\phi\rangle = T_\phi P.$$
    \end{Definition}
    An explicit description of the differential $d_2$ provided in Section \ref{subsection_spectral_sequence} leads to the following proposition.
    \begin{Proposition}\label{FO_Massey_product}
        Let $V$ be a stable vector bundle of positive degree on $C$ and let $\phi\in \Ext^1(V, \O_C)$ be a non-zero element. Then the Feigin--Odesskii Poisson structure $\pi$ at the point $\langle\phi\rangle\in P$ can be expressed as
        $$\pi_\phi = MP(\phi, -, \phi),$$
        where $MP$ is the triple Massey product. Thus, our definition of the Feigin--Odesskii Poisson structure coincides with the definition in \cite[Lemma 2.1]{PolishchukHua2020}.
    \end{Proposition}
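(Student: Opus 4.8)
The plan is to observe that the non-reduced spectral sequence \eqref{FO_spectral_sequence_nonreduced} is precisely the general spectral sequence of Subsection \ref{subsection_spectral_sequence} applied to two copies of the extension \eqref{extension_2}, i.e. with $A = B = E$, $A' = B' = \O_C$, $A'' = B'' = V$, and $\phi = \psi$ the class of \eqref{extension_2}. Once this identification is in place, the explicit description of $d_2$ via the triple Massey product established in Subsection \ref{subsection_spectral_sequence}, namely $d_2(s) = MP(\phi, s, \psi)$, specializes immediately to $d_2(s) = MP(\phi, s, \phi)$, which is the desired formula, provided one checks that the source and target of this $d_2$ are indeed $T^*_\phi P$ and $T_\phi P$.

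First I would spell out the general differential $d_2\colon E_2^{-1,1} \to E_2^{1,0}$ in this case. Its source is $\{s \in \Hom(\O_C, V) \mid \phi \circ s = 0,\ s \circ \phi = 0\}$ and its target is $\Ext^1(V, \O_C)/(\Hom(\O_C, \O_C)\circ \phi + \phi \circ \Hom(V, V))$. For the target, stability of $V$ and $\O_C$ gives $\Hom(\O_C, \O_C) = \Hom(V, V) = k$, so both $\Hom(\O_C, \O_C)\circ \phi$ and $\phi \circ \Hom(V, V)$ equal $\langle\phi\rangle$, and the target collapses to $\Ext^1(V, \O_C)/\langle\phi\rangle = T_\phi P$. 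For the source, the two compositions $\phi \circ s \in \Ext^1(\O_C, \O_C)$ and $s \circ \phi \in \Ext^1(V, V)$ are, by Corollary \ref{Serre_Stable}, both identified under the trace with the Serre duality pairing $\langle \phi, s\rangle$; hence each of the conditions $\phi \circ s = 0$ and $s \circ \phi = 0$ is equivalent to $s \in \langle\phi\rangle^\perp = T^*_\phi P$. This matches the source and target appearing in \eqref{FO_definition_spectral_sequence}, and since the text has already noted that $d_2$ agrees for the reduced and non-reduced spectral sequences, the formula computed on \eqref{FO_spectral_sequence_nonreduced} is exactly the one defining $\pi_\phi$.

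Combining these identifications yields $\pi_\phi = d_2 = MP(\phi, -, \phi)$, and the final sentence follows because \cite[Lemma 2.1]{PolishchukHua2020} defines the Feigin--Odesskii bracket through precisely this triple Massey product. The argument is essentially bookkeeping, so the main point requiring care is the identification of the source and target of the general $d_2$ with $T^*_\phi P$ and $T_\phi P$: here one must use stability of $V$ and $\O_C$ together with Corollary \ref{Serre_Stable} to collapse the two vanishing conditions $\phi \circ s = 0$ and $s \circ \phi = 0$ into the single Serre orthogonality condition $\langle\phi,s\rangle = 0$, and to recognize the denominator as $\langle\phi\rangle$. No genuinely new ingredient beyond the general Massey product description of $d_2$ is needed.
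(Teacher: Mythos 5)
Your proposal is correct and follows the paper's own route: the paper derives this proposition directly from the Massey-product description $d_2(s)=MP(\phi,s,\psi)$ of Subsection \ref{subsection_spectral_sequence}, specialized to $A=B=E$ with $\psi=\phi$, exactly as you do. The identifications you carefully verify --- collapsing the two vanishing conditions $\phi\circ s=0$, $s\circ\phi=0$ to $s\in\langle\phi\rangle^\perp$ and the denominator $\Hom(\O_C,\O_C)\circ\phi+\phi\circ\Hom(V,V)$ to $\langle\phi\rangle$ via simplicity of the stable bundles and Corollary \ref{Serre_Stable} --- are precisely the ones the paper performs when passing from \eqref{FO_spectral_sequence_nonreduced} to \eqref{FO_definition_spectral_sequence}, so you have merely made explicit the bookkeeping the paper leaves implicit.
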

    Note that it follows from Proposition \ref{FO_Massey_product} that $\pi_\phi$ has degree 2 in $\phi$, and hence $\pi$ is a well-defined map of vector bundles $T^*P\to TP$ on $P$.
    
    Since the reduced spectral sequence converges to $\Ext^\bullet(E, E)_0$, it follows immediately that
    \begin{align*}
        &\Ker(\pi_\phi) = \End(E)_0,\\
        &\Coker(\pi_\phi) = \Ext^1(E, E)_0.
    \end{align*}
    As a corollary, we get a formula for rank from \cite{PolishchukHua2020}.
    \begin{Corollary}
        Let $V$ be a stable vector bundle of degree $n>0$ on an elliptic curve $C$. Let $\langle\phi\rangle\in P$ be the point corresponding to a filtered vector bundle $E\supset L\supset 0$. Then the rank of the Feigin--Odesskii Poisson structure $\pi$ at this point equals to
        $$\rk \pi_\phi = n - \dim \End(E).$$
    \end{Corollary}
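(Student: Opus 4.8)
The plan is to deduce this purely by a dimension count from the kernel computation that immediately precedes the statement, since the hard work has already been done in identifying $\Ker(\pi_\phi)$ and $\Coker(\pi_\phi)$ with the traceless parts of $\End(E)$ and $\Ext^1(E,E)$. The Poisson structure at $\langle\phi\rangle$ is the linear map
$$\pi_\phi\colon T^*_\phi P = \langle\phi\rangle^\perp \longrightarrow \Ext^1(V,\O_C)/\langle\phi\rangle = T_\phi P,$$
so by the rank–nullity theorem $\rk\pi_\phi = \dim T^*_\phi P - \dim\Ker(\pi_\phi)$, and everything reduces to computing these two dimensions.

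First I would record that $\dim T^*_\phi P = \dim P = n-1$. Indeed, by Proposition \ref{proposition_dimensions} we have $\dim\Ext^1(V,\O_C) = n$, hence $P = \mb{P}\Ext^1(V,\O_C)$ has dimension $n-1$ and each (co)tangent space is $(n-1)$-dimensional. Next I would use the just-established identification $\Ker(\pi_\phi) = \End(E)_0$ together with the trace decomposition $\End(E) = k \oplus \End(E)_0$ from the Serre-duality subsection, which gives $\dim\End(E)_0 = \dim\End(E) - 1$.

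Combining these,
$$\rk\pi_\phi = (n-1) - \dim\End(E)_0 = (n-1) - \bigl(\dim\End(E) - 1\bigr) = n - \dim\End(E),$$
which is exactly the claimed formula. As a consistency check one can run the dual computation through the cokernel: since $\Coker(\pi_\phi) = \Ext^1(E,E)_0$ and $\Ext^1(E,E) = k \oplus \Ext^1(E,E)_0$, the same count yields $\rk\pi_\phi = \dim T_\phi P - \dim\Ext^1(E,E)_0 = n - \dim\Ext^1(E,E)$; these agree because on an elliptic curve $\chi(\sEnd(E)) = \deg\sEnd(E) = 0$, so $\dim\End(E) = \dim\Ext^1(E,E)$ by Riemann--Roch. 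I do not expect any genuine obstacle here: the entire content sits in the kernel/cokernel identification coming from convergence of the reduced spectral sequence to $\Ext^\bullet(E,E)_0$, and the corollary is simply the bookkeeping of taking the traceless parts and subtracting the one-dimensional trace summand from each side.
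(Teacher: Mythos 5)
Your proof is correct and follows essentially the same route as the paper: rank--nullity applied to $\pi_\phi$, using $\dim T^*_\phi P = n-1$ together with the identification $\Ker(\pi_\phi) = \End(E)_0$ and the trace decomposition giving $\dim\End(E)_0 = \dim\End(E)-1$. Your additional consistency check through the cokernel via $\chi(\sEnd(E))=0$ is sound but goes beyond what the paper's one-line proof records.
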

    \begin{proof}
        Since $\dim T^*_\phi P = n - 1$,
        $$\rk\pi_\phi = n - 1 - \dim \Ker(\pi_\phi) = n - 1 - (\dim \End(E) - 1) = n - \dim\End(E).$$
    \end{proof}
    
   Our construction of the Feigin--Odesskii Poisson structure is functorial in the following sense. Let $A$ be a commutative finitely generated $k$-algebra. Consider the Cartesian square
    $$
    \xymatrix{
        C\times \Spec A\ar[r]^(0.55)q\ar[d]_p & \Spec A\ar[d]^{p'}\\
        C\ar[r]^{q'} & pt
    }
    $$
    and note that the flat base change implies that for locally free sheaves $E, F$ on $C$
    \begin{equation}\label{Ext_A}
        \Ext^\bullet(p^*E, p^*F) = \Ext^\bullet(E, F)\otimes_k A.
    \end{equation}
    as $A$-modules. For a morphism $t\colon \Spec A\to P$ the universal property of $P$ gives a filtered coherent sheaf $\tilde E\supset \tilde L\supset 0$ on $C\times \Spec A$ with $\tilde E/\tilde L\simeq p^*V$ and $\tilde L\simeq p^*\O_C$. Reproducing the definition of the Feigin-Odesskii Poisson structure but for the $A$-linear case, we get a spectral sequence of $A$-modules that computes $\Ext^\bullet(\tilde E, \tilde E)_0$. It follows from \eqref{Ext_A} that the first page looks as follows.
    $$
    \xymatrix{
        \ar@{-}@<4ex>[ddd]  &      \hphantom{----}       &    \hphantom{----}    &  \hphantom{----}   &         \\
        1  &  \Hom(\O_C, V)\otimes_k A\ar[r]  & A&   &\\
        0  &       & A\ar[r] & \Ext^1(V, \O_C)\otimes_k A  &\\
        \ar@{-}@<4ex>[rrrr]  & -1             & 0      & 1&
    }
    $$
    The nontrivial differential $d_2$ on the second page gives a morphism
    $$t^*(T^*P)\to t^*(TP)$$
    of sheaves on $\Spec A$. Globalizing, for any morphism $t\colon U\to P$ for any scheme $U$ we get the functorial map $t^*(T^*P)\to t^*(TP)$, which can be viewed as a functorial version of the Feigin--Odesskii Poisson structure.
    
    \subsection{Symplectic leaves of Feigin--Odesskii Poisson structures}
    \label{subsection_symplectic_leaves}
    
    As before, fix a stable vector bundle $V$ of degree $n>0$ on an elliptic curve $C$ and consider the moduli space $P=\mb{P}\Ext^1(V, \O_C)$ of filtered vector bundles $E\supset L\supset 0$ with associated quotients $E/L \simeq V$, $L\simeq \O_C$. Then $P$ is a disjoint union (as a set) of isomorphism classes of $E$: we say that two filtered vector bundles $E_1\supset L_1\supset 0$ and $E_2\supset L_2\supset 0$ are in the same isomorphism class of $E$ if vector bundles $E_1$ and $E_2$ are isomorphic. The following essential property of Feigin--Odesskii Poisson structures was initially stated in \cite{feigin1995vector}. We provide a simple proof for it.
    \begin{Theorem}\label{thm_symplectic_leaves}
        Connected components of isomorphism classes of $E$ are symplectic leaves of the Feigin--Odesskii Poisson structure $\pi$ on $P$. 
    \end{Theorem}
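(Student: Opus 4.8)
The plan is to establish the two properties that together characterise symplectic leaves: an infinitesimal one, namely that at every point $\langle\phi\rangle$ the tangent space to the isomorphism class of $E$ equals $\Im(\pi_\phi)$, and a global one, namely that connected components of isomorphism classes are locally closed smooth subvarieties and are maximal for the tangent condition. The heart of the argument is the infinitesimal statement, which I would extract directly from the spectral sequence of Subsection \ref{subsection_FO_definition}; the point-set part is where the real care is needed.

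For the infinitesimal identification, fix $\phi$ with associated extension $E$ and filtered bundle $E\supset L\supset 0$. I would first describe the tangent space to the isomorphism class through deformations: a tangent vector to $P$ at $\langle\phi\rangle$ is a deformation $\tilde E\supset\tilde L\supset 0$ of the filtered bundle, and it stays in the isomorphism class exactly when the induced deformation of $E$ is trivial. Since $\det E\simeq\det V$ is rigid, the forget-the-filtration map lands in the traceless part, giving $\rho\colon T_\phi P\to\Ext^1(E,E)_0$, and $T_\phi(\text{iso class})=\Ker(\rho)$. The key is then to identify $\rho$ with the edge map of the reduced spectral sequence. Indeed, the identification $\Coker(\pi_\phi)=\Ext^1(E,E)_0$ is realised, via the second (three-term filtration on $\sEnd(E)_0$) description of the spectral sequence, as the edge map $T_\phi P=E_2^{1,0}\twoheadrightarrow E_\infty^{1,0}=\Ext^1(E,E)_0$, coming from the bottom filtration piece $H^1(\sHom(E/L,L))=\Ext^1(V,\O_C)\to\Ext^1(E,E)$. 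Both this edge map and $\rho$ send a first-order change $\delta\phi$ of the extension class to the class of the resulting deformation of $E$, i.e.\ to the image of $\delta\phi$ under the map $\Ext^1(V,\O_C)\to\Ext^1(E,E)$ induced by $\sHom(V,\O_C)\hookrightarrow\sEnd(E)$; so they coincide. As the spectral sequence degenerates at $E_3$, the edge map is the quotient by $\Im(d_2)=\Im(\pi_\phi)$ and is surjective onto $\Ext^1(E,E)_0$, whence $\Ker(\rho)=\Im(\pi_\phi)$. Combining, $T_\phi(\text{iso class})=\Im(\pi_\phi)$, and a dimension count gives $\dim(\text{iso class})=(n-1)-\dim\Ext^1(E,E)_0=n-\dim\End(E)=\rk\pi_\phi$, consistent with Proposition \ref{Proposition_leaf_smooth}.

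For the global part I would argue stratum by stratum. An isomorphism class has constant $\dim\End(E)$, hence by the rank corollary constant rank $n-\dim\End(E)$, so it lies in a single stratum $S_d=\{\langle\phi\rangle:\dim\End(E_\phi)=d\}$, which is locally closed by upper semicontinuity of $\dim\End(E_\phi)$. On $S_d$ the image $\Im(\pi)$ is a subbundle of constant rank, and since $\pi$ is a Poisson structure this distribution is integrable; its integral leaves are precisely the isomorphism classes, because the latter partition $S_d$ into connected integral submanifolds of the full dimension $\rk\pi$. Each connected component $Z$ of an isomorphism class is then a connected locally closed subset with $T_xZ=\Im(\pi_x)$ for all $x$, and it is maximal: any symplectic leaf through a point of $Z$ has constant rank and so lies in $S_d$, and by uniqueness of maximal integral manifolds (Frobenius) on the constant-rank locus it must coincide with $Z$. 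Conversely every symplectic leaf arises this way, proving the theorem.

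The main obstacle is the legitimacy of the global picture — showing that isomorphism classes are genuinely locally closed and smooth, so that ``tangent space'' and ``integral submanifold'' make sense. I would handle this by realising the isomorphism class of $E$ as the image of the $\Aut(E)$-invariant morphism to $P$ from the open subscheme of $\mb{P}\Hom(\O_C,E)=\mb{P}H^0(E)$ parametrising sub-line-bundles $L\cong\O_C$ with $E/L\simeq V$, whose fibres are exactly the $\Aut(E)$-orbits; local closedness follows from properties of this orbit map, and smoothness is then automatic since the tangent dimension $\dim\Im(\pi_\phi)$ is constant along the class. Everything else is a formal consequence of the spectral-sequence computation together with the standard constant-rank integrability of Poisson distributions.
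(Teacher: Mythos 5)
Your infinitesimal argument is exactly the paper's proof: the paper identifies $E_2^{1,0}$ of the spectral sequence \eqref{FO_spectral_sequence_nonreduced} with $\Def(E\supset L\supset 0)=T_\phi P$, identifies the forgetful map $\Def(E\supset L\supset 0)\to\Def(E)$ with the convergence (edge) map $E_2^{1,0}\to\Ext^1(E,E)$, and concludes that $\Im(\pi_\phi)=\Im(d_2)$ is precisely the set of tangent vectors inducing the trivial deformation of $E$, i.e.\ the tangent space to the isomorphism class. Your reduced-sequence version of this, including the observation that the forgetful map lands in $\Ext^1(E,E)_0$ and that the edge map is surjective because $E_\infty^{0,1}=E_\infty^{-1,2}=0$, is correct and matches the paper. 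Where you differ is the global part, which the paper compresses into a single sentence (``it would imply that locally isomorphism classes are symplectic leaves\dots''): you supply the stratification by $\dim\End(E)$, the $\Aut(E)$-orbit map from an open subset of $\mb{P}H^0(E)$, and Frobenius integrability. This extra scaffolding is a genuine gain in rigor, with two caveats worth tightening: Frobenius yields analytic rather than algebraic leaves, so for maximality it is cleaner to argue directly that any connected locally closed $Z'$ with $T_xZ'=\Im(\pi_x)$ carries a family of bundles with vanishing Kodaira--Spencer class (by the very identification above), hence lies in a single isomorphism class; and ``local closedness follows from properties of this orbit map'' needs a word, since images of morphisms are in general only constructible --- here one should combine upper semicontinuity of $\dim\End(E_\phi)$ with the dimension count $\dim(\mathrm{class\ of\ }E)=n-\dim\End(E)$ to see that the boundary of a class meets only strata with strictly larger $\dim\End$. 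Neither caveat affects the heart of the proof, which coincides with the paper's.
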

    \begin{proof}
        Take a point $\langle\phi\rangle\in P$ corresponding to a filtered vector bundle $E\supset L\supset 0$. It suffices to show that the tangent space to the isomorphism class of $E$ at the point $\langle\phi\rangle$ coincides with the image of $\pi_\phi\colon T^*_\phi P\to T_\phi P$. Indeed, it would imply that locally isomorphism classes of $E$ are symplectic leaves of $\pi$, and then symplectic leaves are connected components of isomorphism classes of $E$.
        
        From the filtration $E\supset L\supset 0$ on $E$ we get the spectral sequence \eqref{FO_spectral_sequence_nonreduced} that computes $\Def(E) = \Ext^1(E, E)$. From this point of view deformations of $E$ are represented by $2\times 2$-matrices, and deformations of $E\supset L\supset 0$ correspond to strictly upper triangular matrices. Hence, the term $E_2^{1, 0}$ of the spectral sequence (it corresponds to strictly upper triangular matrices) can be identified with $\Def(E\supset L\supset 0) = T_\phi P = \Ext^1(V, \O_C)/\langle\phi\rangle$, and the natural map
        $$\Def(E\supset L\supset 0)\to \Def(E)$$
        can be identified with the map
        $$E_2^{1, 0} = \Ext^1(V, \O_C)/\langle\phi\rangle\to \Ext^1(E, E)$$
        given by convergence of the spectral sequence. Hence, the latter map sends a tangent vector $v\in \Ext^1(V, \O_C)/\langle\phi\rangle$ to the induced deformation of $E$. Since the kernel of this map coincides with the image of the differential $d_2$ on the second page of the spectral sequence and $\pi_\phi$ is by definition equals to $d_2$, it follows that the image of $\pi_\phi$ consists of the tangent vectors $v\in T_\phi P$ that induce the trivial deformation of $E$, and this is exactly the tangent space to the isomorphism class of $E$ in $P$.
    \end{proof}
    
    \section{Conormal Lie algebras}\label{section_conormal_algebras}
    
    \subsection{The intrinsic derivative and conormal Lie algebras}\label{subsection_intrinsic_derivative}
    
    Following \cite{AlanWeinstein}, we use the definition of conormal Lie algebras based on the intrinsic derivative. Let us first provide a general definition of the intrinsic derivative.
    
    Let $X$ be a smooth algebraic variety and $\pi\colon E\to F$ be a morphism of vector bundles on $X$. Fix a point $x\in X$ and a tangent vector $v\in T_x X$. We will define the intrinsic derivative
    $$\partial_v\pi\colon \Ker(\pi_x)\to \Coker(\pi_x),$$
    where $\pi_x\colon E_x\to F_x$ is the map of fibers induced by $\pi$. One can find a differential-geometric definition of the intrinsic derivative in \cite{Arnold_singularities, GolubitskyGuillemin}. The intrinsic derivative may be also defined using the Atyiah class (see \cite{Markarian_AtiyahClass}) as follows. Given a morphism $\pi$ as above, consider the two-term complex $P^\bullet = (E \stackrel{\pi}{\to} F)$. Its Atyiah class is an element of hypercohomology $\Ext^1(P^\bullet, P^\bullet\otimes \Omega^1_X)$. The intrinsic derivative is given by the endomorphism of the cohomology of $P^\bullet$ induced by this class. The reader may
    try to express our subsequent manipulation with intrinsic derivative in terms of the formalism of the Atiyah class.
    
    Nevertheless, we will use another definition. Let $t_v\colon D\to X$ be the morphism corresponding to the tangent vector $v\in T_xX$. Restriction of $\pi$ along $t_v$ gives a homomorphism of free $k[\eps]$-modules
    $$\tilde \pi_x\colon \tilde E_x\to \tilde F_x,$$
    where $\tilde E_x = t_v^*E$, $\tilde F_x = t_v^*F$, and $\tilde \pi_x = t_v^*\pi$. Note that $\tilde \pi_x$ is a deformation of $\pi_x\colon E_x\to F_x$, i.e. 
    $$\tilde \pi_x\otimes_{k[\eps]} k = \pi_x.$$
    
    The definition is based on the intrinsic derivative functor $\pmb{D}$ from the derived category $\D(k[\eps])$ to the category of graded $k$-vector spaces equipped with an endomorphism of degree 1. The functor is defined by the formula
    \begin{equation}\label{intrinsic_functor}
        \pmb{D} = p'_*(-\Lotimes_{k[\eps]} \xi),
    \end{equation}
    where $\xi\in \Hom_{\D(k[\eps])}(k, k[1])$ corresponds to the extension
    $$\xymatrix{
        0\ar[r]&k\ar[r]^(0.44)\eps &k[\eps]\ar[r]&k\ar[r]&0,
    }$$
    and the functor $p'_*\colon \D(k[\eps])\to \D(k)$ is induced by the morphism $p'\colon D\to pt$. More accurately, given an object $C^\bullet$ of the category $\D(k[\eps])$, i.e a complex of $k[\eps]$-modules, we take its tensor product with the morphism $\xi\colon k\to k[1]$ and then apply the functor $p'_*$ to get a map
    $$p'_*(C^\bullet \Lotimes_{k[\eps]} k)\to p'_*(C^\bullet \Lotimes_{k[\eps]} k)[1]$$
    in the category $\D(k)$, which can be identified with the category of graded $k$-vector spaces by the cohomology functor. So we obtain a graded vector space $p'_*(C^\bullet\Lotimes_{k[\eps]} k)$ equipped with a degree 1 endomorphism. 
    
    To define the intrinsic derivative of $\pi$ consider the two term complex 
    $$C^\bullet = (C^0\to C^1) = (\tilde E_x\xrightarrow{\tilde \pi_x} \tilde F_x)$$
    and apply the intrinsic derivative functor $\pmb{D}$ to it. Since $\tilde E_x$ and $\tilde F_x$ are flat,
    $$p'_*(C^\bullet \Lotimes_{k[\eps]} k) = p'_*(C^\bullet\otimes_{k[\eps]} k) = H^\bullet(C^\bullet\otimes_{k[\eps]} k) = H^\bullet(E_x\xrightarrow{\pi_x} F_x),$$
    and we get a degree 1 endomorphism of this graded vector space. Thus, the intrinsic derivative functor gives a map
    $$\partial_v\pi\colon \Ker(\pi_x)\to \Coker(\pi_x).$$
    Let us summarize the definition of the intrinsic derivative.
    \begin{Definition}
        Let $X$ be a smooth algebraic variety over $k$, $\pi\colon E\to F$ be a morphism of vector bundles on $X$, and $v\in T_xX$ be a tangent vector to $X$ at a point $x\in X$. The intrinsic derivative 
        $$\partial_v\pi\colon \Ker(\pi_x)\to \Coker(\pi_x)$$
        is defined as follows.
        \begin{enumerate}
            \item Take the morphism $t_v\colon D\to X$ corresponding to $v\in T_x X$. Consider $k[\eps]$-modules $\tilde E_x = t_v^*E$, $\tilde F_x = t_v^*F$ and the map $\tilde \pi_x = t_v^*\pi$ between them.
            \item Apply the intrinsic derivative functor $\pmb{D} = p'_*(-\Lotimes \xi)$ to the complex $(\tilde E_x\xrightarrow{\tilde \pi_x} \tilde F_x)$ to get a degree 1 endomorphism of $H^\bullet(E_x\xrightarrow{\pi_x} F_x)$, i.e a map
            $$\partial_v\pi\colon \Ker(\pi_x)\to \Coker(\pi_x).$$
        \end{enumerate}
    \end{Definition}
    The following proposition gives a description of the intrinsic derivative which is well-known in differential geometry.
    \begin{Proposition}\label{Proposition_derivative}
        Let $X$ be a smooth algebraic variety, $\pi\colon E\to F$ a morphism of vector bundles on $X$, and $v\in T_xX$ a tangent vector to $X$ at a point $x\in X$. Choose trivializations of $E$ and $F$ in a neighborhood $U$ of $x$ and write $\pi$ as a matrix $\Pi$ of functions on $U$. Then the intrinsic derivative $\partial_v\pi$ is the composition
        $$
        \xymatrix@C10.0ex{
            \Ker(\pi_x)\ar[d]\ar[r]^{\partial_v\pi}& \Coker(\pi_x)\\
            E_x\ar[r]^{v(\Pi)}& F_x,\ar[u]
        }
        $$
        where $v(\Pi)$ is the element-wise derivative of the matrix $\Pi$ along $v$, and left and right vertical arrows are the natural embedding of the kernel and the natural projection onto the cokernel, respectively.
    \end{Proposition}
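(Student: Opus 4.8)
The plan is to unwind the intrinsic derivative functor $\pmb{D}$ in the chosen trivializations and identify it with the connecting homomorphism of an explicit short exact sequence of complexes. First I would compute the pullback $\tilde\pi_x = t_v^*\pi$ in coordinates. The morphism $t_v\colon D\to X$ corresponding to $v$ sends a function $f$ defined near $x$ to $f(x) + \eps\cdot v(f)\in k[\eps]$; writing $\pi$ as the matrix $\Pi$ of functions on $U$, it follows that $\tilde\pi_x$ is the map of free $k[\eps]$-modules given by the matrix $\Pi_0 + \eps\,\Pi_1$, where $\Pi_0 = \Pi(x)$ is the matrix of $\pi_x$ and $\Pi_1 = v(\Pi)$ is the element-wise derivative of $\Pi$ along $v$. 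This reduces the statement to a computation with the two-term complex $C^\bullet = (\tilde E_x\xrightarrow{\,\tilde\pi_x\,}\tilde F_x)$ of free $k[\eps]$-modules, whose differential is $\Pi_0 + \eps\,\Pi_1$.

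Next I would identify the degree-$1$ endomorphism produced by $\pmb{D}$ with a connecting homomorphism. Since the terms of $C^\bullet$ are free, hence flat, tensoring the defining extension $0\to k\xrightarrow{\eps} k[\eps]\to k\to 0$ of $\xi$ with $C^\bullet$ over $k[\eps]$ yields a short exact sequence of complexes of $k$-vector spaces
$$0\to \bar C^\bullet\to C^\bullet\to \bar C^\bullet\to 0,$$
where $\bar C^\bullet = C^\bullet\otimes_{k[\eps]} k = (E_x\xrightarrow{\Pi_0} F_x)$, the left-hand map is multiplication by $\eps$, and the right-hand map is reduction modulo $\eps$. By the standard compatibility of the derived tensor product with extension classes, the degree-$1$ endomorphism of $H^\bullet(\bar C^\bullet)$ given by $\pmb{D} = p'_*(-\Lotimes_{k[\eps]}\xi)$ is exactly the connecting homomorphism $\delta$ of this short exact sequence. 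In degree $0$ it is the sought map $\partial_v\pi\colon H^0(\bar C^\bullet) = \Ker(\pi_x)\to H^1(\bar C^\bullet) = \Coker(\pi_x)$.

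It then remains to run the snake-lemma zigzag. Given $\bar x\in \Ker(\Pi_0) = \Ker(\pi_x)$, lift it to the constant element $x = \bar x\in \tilde E_x$; applying the differential of $C^\bullet$ gives $(\Pi_0 + \eps\,\Pi_1)\bar x = \eps\,\Pi_1\bar x$, because $\Pi_0\bar x = 0$. This element lies in the image of the injection $\eps\colon \bar C^1\to C^1$, with preimage $\Pi_1\bar x = v(\Pi)\,\bar x\in F_x$, so $\delta[\bar x]$ is the class of $v(\Pi)\,\bar x$ in $\Coker(\pi_x)$. This is precisely the composition $\Ker(\pi_x)\hookrightarrow E_x\xrightarrow{v(\Pi)} F_x\twoheadrightarrow \Coker(\pi_x)$ asserted in the statement.

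I expect the main obstacle to be the middle step: verifying that $\pmb{D}$, defined through the derived tensor product with the class $\xi$, indeed computes the connecting homomorphism of the tensored extension, together with pinning down the sign conventions hidden in the identification of $-\Lotimes_{k[\eps]}\xi$ with $\delta$ so that the claimed formula holds on the nose. Once this identification is fixed, the coordinate computations in the first and third steps are routine.
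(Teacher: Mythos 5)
Your proposal is correct and follows the same route as the paper: the paper's proof likewise restricts $\pi$ along $t_v$ to get the matrix $\Pi_x + \eps\cdot v(\Pi)$ and then declares the remaining evaluation of $\pmb{D}$ a straightforward calculation, which it omits. Your identification of $-\Lotimes_{k[\eps]}\xi$ with the connecting homomorphism of $0\to \bar C^\bullet\xrightarrow{\eps} C^\bullet\to \bar C^\bullet\to 0$ and the ensuing snake-lemma zigzag correctly supply exactly that omitted computation (with the sign caveat you note being a matter of fixed convention).
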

    \begin{proof}
        Restricting $\pi$ along the morphism $D\to X$ corresponding to the tangent vector $v$, we get a map of free $k[\eps]$-modules given by the matrix $\Pi_x + \eps \cdot v(\Pi)$. The rest of the proof is a straightforward calculation of the intrinsic derivative functor $\pmb{D}$ applied to this map, so we omit it.
    \end{proof}
    
    Now let us, following \cite{AlanWeinstein}, formulate a definition of conormal Lie algebras of a Poisson structures. Let $\pi\colon T^*X\to TX$ be a Poisson structure on a smooth algebraic variety $X$ and $x\in X$ be a point.
    Since $\pi_x$ is skew-symmetric, the natural pairing 
    $$\langle-, -\rangle\colon T_xX\otimes T^*_xX\to k$$ 
    induces a natural isomorphism
    $$\Coker(\pi_x) = \Ker(\pi_x)^*.$$
    
    The conormal Lie algebra is the vector space $\Ker(\pi_x)$ equipped with a Lie bracket that can be constructed as follows. For any tangent vector $v\in T_xX$ consider the intrinsic derivative
    $$\partial_v\pi\colon \Ker(\pi_x)\to \Coker(\pi_x).$$
    It is clear from Corollary \ref{Proposition_derivative} that $\partial_v\pi$ is linear in $v$, so we get a map
    \begin{align*}
        \partial\pi\colon T_xX\otimes \Ker(\pi_x)&\to \Coker(\pi_x),\\
        v\otimes \alpha&\mapsto \partial_v\pi(\alpha).
    \end{align*}
    Moreover, one can check that if $v\in \Im(\pi_x)$ then the map $\partial_v\pi$ is zero, so in fact we have
    $$\partial\pi\colon \Coker(\pi_x)\otimes \Ker(\pi_x)\to \Coker(\pi_x).$$
    Using the isomorphism $\Coker(\pi_x) = \Ker(\pi_x)^*$ and dualizing, we get a map
    \begin{equation}\label{conormal_bracket}
        \Ker(\pi_x)\otimes \Ker(\pi_x)\to \Ker(\pi_x).
    \end{equation}
    This map is clearly skew-symmetric and, in fact, it is a Lie bracket on $\Ker(\pi_x)$.
    \begin{Definition}
        Let $x\in X$ be a point on a smooth Poisson variety $(X, \pi)$. The conormal Lie algebra at $x$ is the vector space $\Ker(\pi_x)$ equipped with the constructed Lie bracket \eqref{conormal_bracket}.
    \end{Definition}
    One of the reasons why conormal Lie algebras are important is the following observation. If a point $x\in X$ is contained in a symplectic leaf $Z\subset X$ then $\Ker(\pi_x)$ can be identified with the conormal space $(N^*_Z X)_x$ of $Z$, and then one can view the conormal Lie bracket on $\Ker(\pi_x)$ as a linearization of the Poisson structure $\pi$ in a neighbourhood of $Z$.
    
    \subsection{Conormal Lie algebras of Feigin--Odesskii Poisson structures}\label{subsection_conormal_FO}
    
    Let $V$ be a stable vector bundle of degree $n>0$ on an elliptic curve $C$ and $P = \mb{P}\Ext^1(V, \O_C)$ be the moduli space of filtered vector bundles $E\supset L\supset 0$ with fixed associated quotients $E/L\simeq V$, $L\simeq \O_C$.
    \begin{Theorem}\label{thm_conormal_algebras}
        Let $\langle\phi\rangle$ be a point of $P$ corresponding to a filtered vector bundle $E\supset L\supset 0$. Then the conormal Lie algebra of the Feigin--Odesskii Poisson structure $\pi$ at the point $\langle \phi\rangle$ is isomorphic to the Lie algebra $\End(E)_0$ of traceless endomorphisms of $E$. 
    \end{Theorem}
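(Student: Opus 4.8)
The plan is to unwind Weinstein's construction of the conormal bracket on $\Ker(\pi_\phi)=\End(E)_0$ and to identify it, using the deformation-theoretic meaning of both the Poisson structure and the intrinsic derivative, with the commutator of traceless endomorphisms. Recall that $\Ker(\pi_\phi)=\End(E)_0$ and $\Coker(\pi_\phi)=\Ext^1(E,E)_0$, and note that $\End(E)_0$ is a Lie subalgebra of $\End(E)$ under the commutator since the trace kills commutators. The conormal bracket is assembled from three ingredients: the identification $\Coker(\pi_\phi)=\Ker(\pi_\phi)^*$, the intrinsic derivative $\partial\pi$, and the dualization producing a map $\Ker(\pi_\phi)\otimes\Ker(\pi_\phi)\to\Ker(\pi_\phi)$. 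I would treat these in turn.

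First I would check that the isomorphism $\Coker(\pi_\phi)=\Ker(\pi_\phi)^*$ coming from skew-symmetry of $\pi_\phi$ agrees, under the identifications above, with the Serre duality pairing $\End(E)_0\otimes\Ext^1(E,E)_0\to k$, which is $(\alpha,e)\mapsto\tr(\alpha\circ e)$ by the trace-composition description of Serre duality. By construction the canonical pairing $T^*_\phi P\otimes T_\phi P\to k$ entering Weinstein's recipe is the Serre pairing between $\langle\phi\rangle^\perp\subset\Hom(\O_C,V)$ and $\Ext^1(V,\O_C)/\langle\phi\rangle$, which by Corollary \ref{Serre_Stable} is computed by Yoneda composition. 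The two then match because the filtration on $\sEnd(E)$ induced by $E\supset L\supset 0$ is self-dual under the trace form: it interchanges $\mathrm{gr}^{-1}=\sHom(\O_C,V)$ with $\mathrm{gr}^{1}=\sHom(V,\O_C)$, and the product $\mathrm{gr}^{-1}\cdot\mathrm{gr}^{1}\to\mathrm{gr}^{0}$ followed by the trace recovers the Serre pairing of the leading terms $\bar\alpha\in\Hom(\O_C,V)$ and $\bar e\in\Ext^1(V,\O_C)$. This is a compatibility of pairings that I would verify by inspecting the multiplication on the associated graded.

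The heart of the proof is the computation of the intrinsic derivative $\partial_v\pi\colon\End(E)_0\to\Ext^1(E,E)_0$. A tangent vector $v\in T_\phi P$ is a deformation of the filtered bundle, hence in particular a deformation $\tilde E$ of $E$ with Kodaira--Spencer class $T\in\Ext^1(E,E)$; by the proof of Theorem \ref{thm_symplectic_leaves} the class of $v$ in $\Coker(\pi_\phi)$ is exactly $T$ viewed in $\Ext^1(E,E)_0$, and $T$ vanishes precisely when $v\in\Im(\pi_\phi)$. By definition the intrinsic derivative is the functor $\pmb{D}$ of \eqref{intrinsic_functor} applied to the two-term complex $(t_v^*T^*P\to t_v^*TP)$ which, by the functoriality of our construction, is the $d_2$-differential of the family spectral sequence over $k[\eps]$ computing $\Ext^\bullet(\tilde E,\tilde E)_0$. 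Thus $\partial_v\pi$ is the Bockstein attached to this deformation. Now $\sEnd(\tilde E)_0$ is the deformation of $\sEnd(E)_0$ whose class is $[-,T]$ by Lemma \ref{lemma_deformation_tensor}, and applying $\pmb{D}$ to $R\Gamma$ of this sheaf produces on $\Ext^\bullet(E,E)_0$ precisely the cup product with $[-,T]$, i.e. the graded commutator $\alpha\mapsto[\alpha,T]=\alpha T-T\alpha$ in the Yoneda algebra. Restricting to degree zero gives $\partial_v\pi(\alpha)=[\alpha,T]\in\Ext^1(E,E)_0$.

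Finally I would substitute this into Weinstein's dualization. Writing $[\alpha,\beta]_{\mathrm{c}}$ for the conormal bracket and $w\in\Ext^1(E,E)_0=\Coker(\pi_\phi)$ for the class of a tangent vector, the defining relation becomes $\tr\big([\alpha,\beta]_{\mathrm{c}}\circ w\big)=\tr\big([\alpha,w]\circ\beta\big)$ for all $w$, where I have used $T_w=w$. Expanding the commutator and using cyclicity of the trace on the Yoneda algebra $\Ext^\bullet(E,E)$ turns the right-hand side into $\pm\tr\big(w\circ[\alpha,\beta]\big)$ with $[\alpha,\beta]$ the ordinary commutator in $\End(E)_0$; nondegeneracy of Serre duality then forces $[\alpha,\beta]_{\mathrm{c}}=\pm[\alpha,\beta]$. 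Since negation is a Lie-algebra automorphism, either sign identifies the conormal Lie algebra with $(\End(E)_0,[-,-])$, which is the assertion. The main obstacle I anticipate is the middle step: one must justify carefully that $\pmb{D}$ of the single family differential $d_2$ reproduces the $0\to 1$ component of the commutator operator living on the whole algebra $\Ext^\bullet(E,E)_0$, i.e. that the intrinsic-derivative functor is compatible with the functorial, multiplicative spectral sequence. Tracking the signs coming from Weinstein's dualization, the Bockstein, and Serre duality is the remaining fiddly point, but it does not affect the final isomorphism.
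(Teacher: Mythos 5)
Your overall route is the paper's own: restrict $\pi$ along $t_v\colon D\to P$ via the functorial description, identify $t_v^*\pi$ with the differential $d_2$ of the family spectral sequence of $k[\eps]$-modules computing $\Ext^\bullet(\tilde E,\tilde E)_0$, use Lemma \ref{lemma_deformation_tensor} to see that $\sEnd(\tilde E)_0$ is the deformation of $\sEnd(E)_0$ with class $[-,T]$, conclude that $\partial_v\pi(\alpha)=[\alpha,T]$, and then unwind Weinstein's dualization through the trace pairing to land on the commutator. Your explicit check that the skew pairing $\Ker(\pi_\phi)\otimes\Coker(\pi_\phi)\to k$ matches Serre duality via the trace form on the associated graded of $\sEnd(E)$ is a point the paper leaves implicit, and your handling of it is sound.

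However, the ``middle step'' you flag is not a routine compatibility you can defer --- it is exactly the nontrivial content, and your proposal leaves it open. The intrinsic derivative is by definition $\pmb{D}$ applied to the two-term complex of free $k[\eps]$-modules $(C^0\xrightarrow{d_2}C^1)$ sitting on the second page; to compute it as $\pmb{D}(Rq_*\sEnd(\tilde E)_0)$, as both you and the paper do, one needs an honest quasi-isomorphism in $\D(k[\eps])$ between these two objects, and the pages of a spectral sequence over $k[\eps]$ do not by themselves determine the underlying object of the derived category (over a non-field base, nontrivial extensions by $\eps$ are precisely what $\pmb{D}$ detects). The paper closes this in Lemma \ref{lemma_second_page}: first, reducing mod $\eps$ shows the first-page differentials of the family sequence are surjective in row $1$ and injective in row $0$, so the second page is concentrated in the single two-term $d_2$ complex; then applying the d\'ecalage construction twice to the filtration on $Rq_*\sEnd(\tilde E)_0$ yields a finite filtration all of whose associated graded pieces are acyclic except one, which is $(C^0\xrightarrow{d_2}C^1)$, giving the required quasi-isomorphism. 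Your subsequent assertion that $\pmb{D}$ of $Rq_*$ of a deformation acts by its Kodaira--Spencer class is the paper's Lemma \ref{lemma_intrinsic_to_deformation} (projection formula plus flat base change); it is routine but must be stated and proved, since it is what converts the abstract functor $\pmb{D}$ into cup product with $T$. One small slip at the end: negation is not a Lie algebra automorphism; rather $x\mapsto -x$ gives an isomorphism between $(\mathfrak g,[\,\cdot\,,\,\cdot\,])$ and the same space with the negated bracket, which is what your sign argument actually needs, and with that correction the conclusion stands.
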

    The rest of the section consists of the proof of this theorem. We need to compute the intrinsic derivative of $\pi\colon T^*P\to TP$. Let $\langle \phi\rangle\in P$ be a point corresponding to a filtered vector bundle $E\supset L\supset 0$ on $C$ and let $v\in T_\phi P$ be a tangent vector corresponding to a deformation $\tilde E\supset \tilde L\supset 0$. Recall from Section \ref{subsection_deformations} that we have two Cartesian squares
    $$
    \xymatrix{
        C\times D\ar[r]^(0.55)q\ar[d]_p & D\ar[d]^{p'} && C\times D\ar[r]^(0.55){q} & D\\
        C\ar[r]^{q'} & pt, && C\ar[r]^{q'}\ar[u]^i & pt\ar[u]_{i'},
    }
    $$
    and by Definition \ref{definition_filtered_deformation},
    $$i^*\tilde E = E,\quad i^*\tilde L = L,$$
    $$\tilde E/\tilde L\simeq p^*V,\quad \tilde L\simeq p^*\O_C.$$
    
    In order to compute the intrinsic derivative $\partial_v\pi$ we need to restrict $\pi$ along the morphism $t_v\colon D\to P$. The functorial description of $\pi$ shows that $\tilde \pi_\phi = t_v^*\pi$ is the differential $d_2$ on the second page of the spectral sequence of $k[\eps]$-modules given by a filtration on $\sEnd(\tilde E, \tilde E)_0$. This spectral sequence computes $\Ext^\bullet(\tilde E, \tilde E)_0 = R^\bullet q_*\sEnd(\tilde E)_0$, and it has the following first page.
    \begin{equation}\label{deformed_spectral_sequence}
        \xymatrix{
            \ar@{-}@<4ex>[ddd]  &      \hphantom{-----}       &    \hphantom{-----}    &  \hphantom{-----}   &         \\
            1  &  \Hom(\O_C, V)\otimes_k k[\eps]\ar[r]  & k[\eps]&   &\\
            0  &       & k[\eps]\ar[r] & \Ext^1(V, \O_C)\otimes_k k[\eps]  &\\
            \ar@{-}@<4ex>[rrrr]  & -1             & 0      & 1&
        }
    \end{equation}
    If we take the tensor product of this spectral sequence with $k$, we will get the spectral sequence \eqref{FO_definition_spectral_sequence} with $\pi_\phi$ on the second page. Hence, differentials in the first and zeroth rows of \eqref{deformed_spectral_sequence} are surjective and injective, respectively. The theorem will follow from two lemmas.
    \begin{Lemma}\label{lemma_second_page}
        The second page of the described spectral sequence, i.e. the two term complex with the differential $d_2$, is quasi-isomorphic to $R q_*\sEnd(\tilde E)_0$.
    \end{Lemma}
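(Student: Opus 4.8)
The plan is to replace $Rq_*\sEnd(\tilde E)_0$ by an explicit two-term complex of free $k[\eps]$-modules and then strip off its acyclic part by Gaussian elimination. Write $K:=Rq_*\sEnd(\tilde E)_0$. Since $\tilde E$ is flat over $D$ and $q$ is proper with one-dimensional fibres, $K$ is a perfect complex of Tor-amplitude $[0,1]$, hence representable by a two-term complex of finite free $k[\eps]$-modules. The three-step filtration on $\sEnd(\tilde E)_0$ coming from $\tilde E\supset\tilde L\supset 0$ lets me choose such a representative with a compatible filtration whose graded pieces are $Rq_*(\operatorname{gr}^{-1})\simeq M_{-1}:=\Hom(\O_C,V)\otimes_k k[\eps]$ in degree $0$, $Rq_*(\operatorname{gr}^{1})\simeq M_1:=\Ext^1(V,\O_C)\otimes_k k[\eps]$ in degree $1$, and $Rq_*(\operatorname{gr}^{0})$. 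Because the middle piece is the pullback $p^*(\O_C\oplus\sEnd V)_0$, its pushforward is $R\Gamma(C,(\O_C\oplus\sEnd V)_0)\otimes_k k[\eps]$, which splits as $k[\eps]\oplus k[\eps][-1]$. Assembling the total complex from these pieces gives
\[ K \simeq \bigl[\, M_{-1}\oplus k[\eps] \xrightarrow{\;D\;} k[\eps]\oplus M_1 \,\bigr],\qquad D=\begin{pmatrix}\alpha & 0\\ \gamma & \delta\end{pmatrix}, \]
concentrated in degrees $0,1$; the upper-right zero records that the internal differential of the split middle piece vanishes, $\alpha\colon M_{-1}\to k[\eps]$ and $\delta\colon k[\eps]\to M_1$ are the two first-page differentials $d_1$, and $\gamma\colon M_{-1}\to M_1$ is the filtration-by-two component.

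Next I would record the shape of $\alpha$ and $\delta$. These are the $d_1$ differentials of \eqref{deformed_spectral_sequence}, which the text already notes become surjective (row $q=1$) and injective (row $q=0$) after $\otimes_{k[\eps]}k$. Over the local ring $k[\eps]$ Nakayama's lemma upgrades this: $\alpha$ is a split surjection with free kernel $\Ker\alpha = E_2^{-1,1}$, and $\delta$ is a split injection with free cokernel $\Coker\delta = E_2^{1,0}$ (its generator $\delta(1)$ reduces to $\phi\neq 0$, hence completes to a basis of $M_1$). Choosing splittings $M_{-1}=E_2^{-1,1}\oplus M_{-1}''$ with $\alpha|_{M_{-1}''}$ an isomorphism onto $k[\eps]$, and $M_1 = \delta(k[\eps])\oplus M_1''$ with $M_1''\cong E_2^{1,0}$, exhibits two contractible direct summands of $D$, namely $\delta\colon k[\eps]\xrightarrow{\sim}\delta(k[\eps])$ and $\alpha|_{M_{-1}''}\colon M_{-1}''\xrightarrow{\sim}k[\eps]$.

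Finally I would cancel these two summands using the standard Gaussian-elimination (homological perturbation) lemma for complexes, which replaces a differential carrying an isomorphic block by its Schur complement. Because $D$ is block upper triangular, both Schur-complement corrections vanish, so the cancellation leaves exactly the two-term complex $\bigl[\,E_2^{-1,1}\xrightarrow{d'} E_2^{1,0}\,\bigr]$ in which $d'$ is the $\operatorname{gr}^{1}$-component of $\gamma$ restricted to $\Ker\alpha$ and reduced modulo $\operatorname{im}\delta$. This is precisely the zig-zag computing the second differential of the spectral sequence, so $d'=d_2$ and the promised quasi-isomorphism $K\simeq \bigl[E_2^{-1,1}\xrightarrow{d_2}E_2^{1,0}\bigr]$ follows. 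The main obstacle is the first step: strictifying $K$ to a filtered two-term complex of free modules with the triangular differential, together with the bookkeeping that matches the surviving differential after cancellation with $d_2$. The cancellation itself and the Nakayama arguments are routine.
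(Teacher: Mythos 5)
Your argument is sound and reaches the lemma by a genuinely different route than the paper. The paper never strictifies anything: it applies the d\'ecalage construction (\cite[Exercise 5.4.3]{Weibel}) twice to the given filtration on $Rq_*\sEnd(\tilde E)_0$, so that the old $E_2$-page with $d_2$ becomes the $E_0$-page of a new filtration; all associated graded pieces of that new filtration are then acyclic except one, which is the two-term complex $C^0\xrightarrow{\tilde\pi_\phi}C^1$, and an induction on the finite filtration gives the quasi-isomorphism. You instead build a minimal filtered model over $k[\eps]$ and cancel contractible summands: your Nakayama upgrades (that $\alpha$ is split surjective with free kernel $E_2^{-1,1}$ and $\delta$ split injective with free cokernel $E_2^{1,0}$, since their reductions mod $\eps$ are $\langle\phi,-\rangle$ and $\cdot\,\phi$ with $\phi\neq 0$), the vanishing of both Schur-complement corrections by block-triangularity of $D$, and the zig-zag identification $d'=d_2$ are all correct, and correspond in the paper's proof to the acyclicity of the other d\'ecalage graded pieces. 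The one step you rightly flag, strictification, does need justification but is available by standard means: since the associated quotients deform trivially, each $Rq_*\operatorname{gr}^p$ has free cohomology over $k[\eps]$, hence splits in $\D(k[\eps])$, and the two extension triangles $Rq_*F^{p+1}\to Rq_*F^{p}\to Rq_*\operatorname{gr}^{p}$ can be realized by honest chain maps out of complexes of free modules with zero differential; iterated cones then produce exactly your twisted complex, filtered quasi-isomorphic to the original filtered complex, with the filtration-degree-one components equal to $d_1$ (this filtered compatibility is what licenses $d'=d_2$). In exchange for this extra step your proof is more explicit than the paper's: it produces an actual homotopy equivalence of two-term complexes of free $k[\eps]$-modules and makes visible where flatness, the triviality of the deformed quotients, and $\phi\neq 0$ enter; the d\'ecalage proof is shorter, needs no finiteness or freeness of the $E_1$-terms, and avoids any choice of model.
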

    \begin{proof}
        Let us for brevity write the two term complex on the second page as $C^0\xrightarrow{\tilde \pi_\phi}C^1$: the second page looks as follows.
        $$
        \xymatrix{
            \ar@{-}@<4ex>[ddd]  &      \hphantom{---}       &    \hphantom{---}    &  \hphantom{---}   &         \\
            1  &  C^0\ar[rrd]^{\tilde \pi_\phi}  & &   &\\
            0  &       &  & C^1  &\\
            \ar@{-}@<4ex>[rrrr]  & -1             & 0      & 1&
        }
        $$
        Note that the spectral sequence actually comes from a filtration on the complex $Rq_*\sEnd(\tilde E)_0$. If we apply the d\'ecalage construction \cite[Exercise 5.4.3]{Weibel} two times, we will get another filtration on $Rq_*\sEnd(\tilde E)_0$ such that the induced spectral sequence has the following zeroth page.
        $$
        \xymatrix{
            \ar@{-}@<4ex>[dddd]  &      \hphantom{---}       &    \hphantom{---}    &  \hphantom{---}   &         \\
            2  &    C^1   &  &   &\\
            1  &  C^0\ar[u]^{\tilde \pi_\phi}  & &   &\\
            0  &       &  &   &\\
            \ar@{-}@<4ex>[rrrr]  & -1             & 0      & 1&
        }
        $$
        Thus, we get a finite filtration on $Rq_*\sEnd(\tilde E)_0$ such that all but one of the associated quotients are trivial as objects of $\D(k[\eps])$, and the only non-trivial quotient is quasi-isomorphic to $C^0\xrightarrow{\tilde \pi_\phi} C^1$. One can easily deduce from this (e.g. by induction) that then the whole complex $Rq_*\sEnd(\tilde E)_0$ is quasi-isomorphic to $C^0\xrightarrow{\tilde \pi_\phi} C^1$.
    \end{proof}
    
    \begin{Lemma}\label{lemma_intrinsic_to_deformation}
        Let $\tilde E$ be a deformation of a locally free sheaf $E$ on $C$ corresponding to an element $T\in \Ext^1(E, E)$. Then $\pmb{D}(Rq_*\tilde E)$ is the graded vector space $H^\bullet(C, E)$ with the degree 1 endomorphism
        $$H^\bullet(C, E)\xrightarrow{T} H^{\bullet + 1}(C, E).$$
    \end{Lemma}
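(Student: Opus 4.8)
The plan is to recognize that the two structures to be compared—the graded vector space with degree $1$ endomorphism $\pmb{D}(Rq_*\tilde E)$ and the $T$-action on $H^\bullet(C, E)$—both arise by performing the same two operations, namely forming the (derived) tensor product with $\xi$ and pushing forward all the way to a point, but in the opposite order, and that these two orders commute. Recall that $T = p_*(\tilde E\otimes q^*\xi)$ is produced by the exact functor $p_*(\tilde E\otimes q^*(-))\colon \D(k[\eps])\to\D(C)$, whereas $\pmb{D} = p'_*(-\Lotimes_{k[\eps]}\xi)$ pushes forward along $p'\colon D\to pt$. Since $q'\circ p = p'\circ q$ is the structure morphism of $C\times D$ and both $p$ and $p'$ are affine, one has the identity $Rq'_*\circ p_* = p'_*\circ Rq_*$ of functors $\D(C\times D)\to\D(k)$, and this is what will let me move $Rq_*$ past the pushforward to the point.

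First I would identify the underlying graded vector space. Applying base change to the Cartesian square formed by $i,i',q,q'$, and using that $\tilde E$ is flat over $D$ so that $Li^*\tilde E = i^*\tilde E = E$, I obtain
$$(Rq_*\tilde E)\Lotimes_{k[\eps]} k = Li'^*Rq_*\tilde E = Rq'_*Li^*\tilde E = Rq'_*E = R\Gamma(C, E).$$
Here $Lq^* = q^*$ and the base change are justified by flatness of $q$. Hence $p'_*\big((Rq_*\tilde E)\Lotimes_{k[\eps]} k\big)$ has cohomology $H^\bullet(C, E)$, which is the claimed underlying graded space.

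It then remains to identify the degree $1$ endomorphism with the action of $T$. For this I would invoke the projection formula $Rq_*(\tilde E\Lotimes q^* M) = (Rq_*\tilde E)\Lotimes_{k[\eps]} M$ for $M\in\D(k[\eps])$; because $\tilde E$ is flat over $D$, the functor $\tilde E\otimes q^*(-)$ is already exact, so the derived tensor product on the left coincides with the underived one appearing in the definition of $T$. Combining the projection formula with the commutation $p'_*\circ Rq_* = Rq'_*\circ p_*$ gives, for every $M$,
$$p'_*\big((Rq_*\tilde E)\Lotimes_{k[\eps]} M\big) = p'_*Rq_*(\tilde E\otimes q^* M) = Rq'_*\,p_*(\tilde E\otimes q^* M).$$
Taking $M = \xi$, the left-hand side is exactly $\pmb{D}(Rq_*\tilde E)$, while the right-hand side is $Rq'_*(T) = R\Gamma(C, T)$, whose induced map on cohomology is precisely $H^\bullet(C, E)\xrightarrow{T}H^{\bullet+1}(C, E)$. (The specialization $M = k$ of this display recovers, and is compatible with, the identification of the underlying graded space from the previous step.)

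The main obstacle I anticipate is the bookkeeping required to check that every tensor product and pushforward may legitimately be taken underived in this mild setting: the commutation $Rq'_*p_* = p'_*Rq_*$ rests on the affineness of $p$ and $p'$, while the replacement of $\Lotimes$ by $\otimes$ in the projection formula rests on the flatness of $\tilde E$ over $D$ together with the flatness of $q$. Once these compatibilities are verified, the two orders of pushing $\xi$ forward agree on the nose and the lemma follows.
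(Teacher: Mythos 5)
Your proposal is correct and follows essentially the same route as the paper's proof: identify $\pmb{D}(Rq_*\tilde E)$ with $Rq'_*p_*(\tilde E\otimes q^*\xi) = Rq'_*(T)$ via the projection formula and the commutation $p'_*\circ Rq_* = Rq'_*\circ p_*$ coming from $q'\circ p = p'\circ q$. The only (harmless) differences are cosmetic: you justify replacing $\Lotimes$ by $\otimes$ using flatness of $\tilde E$ over $D$, where the paper instead invokes local freeness of $\tilde E$ on $C\times D$ (its Lemma \ref{lemma_deformation}), and your separate base-change computation of the underlying graded space is, as you yourself note, just the $M = k$ specialization of your main display.
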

    \begin{proof}
        Recall from Section \ref{subsection_deformations} that
        \begin{equation}\label{deformation_on_C}
            T = p_*(\tilde E\otimes q^*\xi)\colon E\to E[1] \text{ in } \D(C).
        \end{equation}
        Moreover, since by Lemma \ref{lemma_deformation}, $\tilde E$ is a locally free sheaf on $C\times D$, we can rewrite \eqref{deformation_on_C} using derived tensor product:
        $$T = p_*(\tilde E\Lotimes q^*\xi)\colon E\to E[1] \text{ in } \D(C).$$
        Then by the projection formula and flat base change property,
        $$\pmb{D}(Rq_*\tilde E) = p'_*((Rq_*\tilde E)\Lotimes \xi) = p'_*(Rq_*(\tilde E\Lotimes q^*\xi)) = Rq'_*(p_*(\tilde E\Lotimes q^*\xi)) = Rq'_*(T),$$
        and this is exactly the endomorphism of $H^\bullet(C, E)$ induced by $T$.
    \end{proof}
    
    Now we are ready to finish the proof of Theorem \ref{thm_conormal_algebras}. In order to compute the intrinsic derivative of $\pi$ we need to apply the intrinsic derivative functor to the second page of the spectral sequence \eqref{deformed_spectral_sequence}. Since by Lemma \ref{lemma_second_page} the second page is quasi-isomorphic to $Rq_*\sEnd(\tilde E)_0$ and the functor $\pmb{D}$ is well-defined on the derived category, we will apply $\pmb{D}$ to $Rq_*\sEnd(\tilde E)_0$. Taking the traceless part of the third statement of Lemma \ref{lemma_deformation_tensor}, we obtain that $\sEnd(\tilde E)_0$ is the deformation of $\sEnd(E)_0$ corresponding to the element $[-, T]\in \Ext^1(\sEnd(E)_0, \sEnd(E)_0)$. Finally, by Lemma \ref{lemma_intrinsic_to_deformation}, $\pmb{D}(Rq_*\sEnd(\tilde E)_0)$ gives the map
    $$H^0(C, \sEnd(E)_0)\xrightarrow{[-, T]}H^1(C, \sEnd(E)_0).$$
    Thus, the intrinsic derivative of the Feigin--Odesskii Poisson structure at the point corresponding to a filtered vector bundle $E\supset L\supset 0$ along the tangent vector corresponding to a deformation $\tilde E\supset \tilde L\supset 0$ is the map
    $$\End(E)_0\xrightarrow{[-, T]}\Ext^1(E, E)_0,$$
    where $T\in \Ext^1(E, E)$ corresponds to the deformation $\tilde E$ of $E$. Then the induced Lie bracket on $\Ker(\pi_\phi) = \End(E)_0$ coincides with the natural commutator
    $$\End(E)_0\otimes \End(E)_0\xrightarrow{[-, -]} \End(E)_0.$$
    The proof of Theorem \ref{thm_conormal_algebras} is complete.
    
    \printbibliography
\end{document}